\documentclass[11pt,a4paper,reqno]{amsart}

\usepackage[a4paper,lmargin=2.5cm,rmargin=2cm,tmargin=4cm,bmargin=4cm]{geometry}

\makeatletter
\g@addto@macro\bfseries{\boldmath} 
\makeatother

\usepackage[centertags]{amsmath}
\usepackage{amsfonts}
\usepackage{amssymb}
\usepackage{amsthm}

\usepackage{hyperref}
	\hypersetup{breaklinks=true,colorlinks=true,
linkcolor=MidnightBlue,citecolor=MidnightBlue,
urlcolor=MidnightBlue}

\usepackage{tikz}
\usepackage{tikz-cd}
\usepackage[normalem]{ulem}
\usepackage[shortlabels]{enumitem}

\usepackage[dvipsnames]{xcolor}

\usepackage{mathtools,dsfont}

\usepackage{mathrsfs}
\usepackage{graphicx}

\usepackage{orcidlink}

\usepackage{todonotes}

\newtheorem{theorem}{Theorem}[section]

\newtheorem{lemma}[theorem]{Lemma}

\theoremstyle{definition}
\newtheorem{definition}[theorem]{Definition}

\theoremstyle{remark}
\newtheorem{remark}[theorem]{Remark}

\newtheorem{fact}[theorem]{Fact}

\numberwithin{equation}{section}

\newcommand{\Z}{\mathbb{Z}}
\newcommand{\N}{\mathbb{N}}
\newcommand{\K}{\mathbb{K}}

\newcommand{\T}{\mathbb{T}}

\newcommand{\vertiii}[1]{{\left\vert\kern-0.25ex\left\vert\kern-0.25ex\left\vert #1 
		\right\vert\kern-0.25ex\right\vert\kern-0.25ex\right\vert}}

\makeatletter
\newcommand{\markthis}[3]{
	\overset{
		\textup{\makebox[0pt]{#1}}%
		\def\@currentlabel{#1}%
		\ltx@label{#2}%
	}{
		#3%
	}%
}

\DeclareMathOperator{\re}{Re}

\newcommand{\nn}[1]{{\left\vert\kern-0.25ex\left\vert\kern-0.25ex\left\vert #1 
		\right\vert\kern-0.25ex\right\vert\kern-0.25ex\right\vert}}

\newcommand{\nnn}[1]{{\left\vert\kern-0.25ex\left\vert\kern-0.25ex\left\vert #1 
		\right\vert\kern-0.25ex\right\vert\kern-0.25ex\right\vert_1}}

\newcommand{\nnnn}[1]{{\left\vert\kern-0.25ex\left\vert\kern-0.25ex\left\vert #1 
		\right\vert\kern-0.25ex\right\vert\kern-0.25ex\right\vert_2}}

        \newcommand{\abs}[1]{\left\vert#1\right\vert}
        
\renewcommand{\geq}{\geqslant}
\renewcommand{\leq}{\leqslant}

\newcommand{\norm}[1]{\left\Vert#1\right\Vert}

\newcommand{\spann}{\operatorname{span}}

\newcommand{\supp}{\operatorname{supp}}

\newcommand{\eps}{\varepsilon}

\newcounter{smallromans}

	{\end{list}}

\makeatletter
\renewcommand{\tocsection}[3]{%
	\indentlabel{\@ifnotempty{#2}{\bfseries\ignorespaces#1 #2\quad}}\bfseries#3}
\renewcommand{\tocsubsection}[3]{%
	\indentlabel{\@ifnotempty{#2}{\ignorespaces#1 #2\quad}}#3}

\renewcommand\@dotsep{4.5}
\def\@tocline#1#2#3#4#5#6#7{\relax
	\ifnum #1>\c@tocdepth 
	\else
	\par \addpenalty\@secpenalty\addvspace{#2}%
	\begingroup \hyphenpenalty\@M
	\@ifempty{#4}{%
		\@tempdima\csname r@tocindent\number#1\endcsname\relax
	}{%
		\@tempdima#4\relax
	}%
	\parindent\z@ \leftskip#3\relax \advance\leftskip\@tempdima\relax
	\rightskip\@pnumwidth plus1em \parfillskip-\@pnumwidth
	#5\leavevmode\hskip-\@tempdima{#6}\nobreak
	\leaders\hbox{$\m@th\mkern \@dotsep mu\hbox{.}\mkern \@dotsep mu$}\hfill
	\nobreak
	\hbox to\@pnumwidth{\@tocpagenum{\ifnum#1=1\bfseries\fi#7}}\par
	\nobreak
	\endgroup
	\fi}
\AtBeginDocument{%
	\expandafter\renewcommand\csname r@tocindent0\endcsname{0pt}
}
\def\l@subsection{\@tocline{2}{0pt}{2.5pc}{5pc}{}}
\makeatother

\begin{document}

	\title[A strictly convex and smooth Banach space with the Daugavet property]{A strictly convex and smooth Banach space with the Daugavet property}

\date{\today}

\author[Dantas]{Sheldon Dantas}
\address[Dantas]{Czech Technical University in Prague, FEE, Department of Mathematics, Technick\'a 2, 16627, Prague 6, Czech Republic. \newline
\href{https://orcid.org/0000-0001-8117-3760}{ORCID: \texttt{0000-0001-8117-3760}}}
\email{\texttt{sheldon.dantas@fel.cvut.cz}}
\urladdr{www.sheldondantas.com}

\author[Kaasik]{Jaan Kristjan Kaasik}
\address[Kaasik]{Institute of Mathematics and Statistics. University of Tartu, Narva Mnt 18, 51009 Tartu, Estonia. \newline
\href{https://orcid.org/0000-0001-6561-5557}{ORCID: \texttt{0000-0001-6561-5557}}}
\email{\texttt{jaan.kristjan.kaasik@ut.ee}}

\author[Perreau]{Yo\"el Perreau}
\address[Perreau]{Institute of Mathematics and Statistics, University of Tartu, Narva mnt 18, 51009 Tartu, Estonia}
\email{yoel.perreau@ut.ee}
\urladdr{
\href{https://orcid.org/0000-0002-2609-5509}{ORCID: \texttt{0000-0002-2609-5509} } }

\begin{abstract} We construct equivalent norms on $L_1[0,1]$, arbitrarily close to the usual $L_1$-norm, which are weakly midpoint locally uniformly rotund and Gâteaux smooth, and whose dual norms have the Daugavet property. In particular, we obtain a strictly convex and smooth Banach space with the Daugavet property, thereby solving a longstanding open problem in the area.
\end{abstract}

	\subjclass[2020]{46B03, 46B20, 46B10, 46B25}
	\keywords{Daugavet property; strictly convex spaces; Gâteaux smoothness; renorming}
	
	\maketitle

\tableofcontents

\section{Introduction}

The Daugavet property (DPr, for short) is a remarkable geometric property of Banach spaces which originates from the classical norm identity
\begin{equation*}
    \|I+T\|=1+\|T\|
\end{equation*}
for compact operators $T$ on the space $C[0,1]$, which was first observed by I.K.~Daugavet in \cite{Daugavet63}. This phenomenon was later generalized to a rich family of classical Banach spaces such as $C(K)$-spaces for perfect compact Hausdorff spaces $K$ and $L_1(\mu)$-spaces for atomless measures $\mu$ \cite{KSSW00}, and Lipschitz free-spaces $\mathcal{F}(M)$ for length metric spaces $M$ \cite{GPRZ18}, as well as some more exotic examples, such as the Bourgain--Rosenthal space \cite{KW04} and a quotient of $L_1[0,1]$ obtained in \cite[Theorem~3.4]{KSSW00} using Talagrand's construction \cite{Tal90}. The DPr has also been studied in non-commutative settings, including $C^*$-algebras, non-commutative $L_p$-spaces, preduals of von Neumann algebras, and $JB^*$-triples and their isometric preduals \cite{Oik02,BGM05,MU14}. More recently, the DPr has also been investigated in symmetric projective tensor products \cite{MRZ22} and vector-valued Lipschitz spaces \cite{MRZ25}. It has therefore generated an intense research activity among people working in various areas of functional analysis over the last three decades. We refer to the recent monograph \cite{KMRZW25} for a thorough coverage of the history as well as of the most recent advances on the topic.

Since the discovery of the celebrated geometric characterization of the DPr from \cite{KSSW00}, it has become clear that the DPr imposes very strong constraints on the isomorphic structure and the geometry of the unit ball of the underlying Banach space \cite{KSSW00,Shv00,LRZ2021,KMRZW25}. In particular, spaces with the Daugavet property present a strong diameter two behavior and hence fail the Radon-Nikodým property \cite{KSSW00,KMRZW25}; contain isomorphic and, actually, asymptotically isometric copies of the space $\ell_1$ \cite[Theorem~2.9]{KSSW00}; and do not embed isomorphically in Banach spaces with an unconditional basis \cite{KSSW00,Shv00}. These properties seem hardly compatible with any properties of rotundity or smoothness of the norm, and the question of the existence of a strictly convex Banach space with the Daugavet property was in fact already raised thirty years ago by V.M.~Kadets in \cite{Kad96} (see also \cite[Questions~3.3 and~3.4]{KMRZW25}). Let us point out that the dual of a Banach space with the DPr is neither smooth nor strictly convex (see \cite[Theorem 2.1]{KMMP09}).

In the absence of completeness, the strictly convex case was already known to have a positive answer \cite{Kad96,KMMP09}, and, more recently, a non-complete smooth normed space with the Daugavet property has also been constructed \cite{Ham26-1}. Furthermore, geometric constructions of non-complete smooth and strictly convex normed spaces with the Daugavet property can be obtained in a rather straightforward way (unpublished work) by using inductively the fundamental property of strong Gurarii spaces together with a finite-dimensional approach of the Daugavet property in the spirit of \cite{Kad96} or \cite{KW04}. In all three cases, passing to the completion seems to ruin strict convexity and/or smoothness of the norm, and the Banach space case has remained open. In fact, it was not even known if there could exist a strictly convex or smooth Banach space with a Daugavet point in the sense of \cite{AHLP20} (see \cite[Question~7.5]{MPRZ24} and \cite{ALMP22,AALMPPV24,HLPV23}). On the other hand, it should be pointed out that weaker forms of diameter two properties were known for a long time to be compatible with some rather strong properties of rotundity of the norm for Banach spaces. For instance, the quotient $C(\T)/A$, where $C(\T)$ is the space of continuous functions on the complex unit circle $\T$ and $A$ is the disc algebra, is known to be strictly convex (in fact wMLUR) and to satisfy the diameter two property \cite{AHNTT16} (see also \cite{NPTV24,CH25,DBPS25,AHLT26} for more recent examples and advances in this direction). Nevertheless, a quick study of the aforementioned constructions yields that they fail to have the DPr and, in most cases, they fail it in the stronger sense that their unit sphere contains no Daugavet points.

The purpose of the present paper is to answer both questions affirmatively and, in fact, simultaneously. Given a probability space $(\Omega,\Sigma,\mu)$ and $\eps>0$, we construct an equivalent norm $\nn{\cdot}$ on $L_1(\mu)$ which is $(1+\eps)$ equivalent to the original $L_1$-norm  by means of an invertible measure-preserving transformation $T:\Omega\rightarrow\Omega$. Under the extra assumption that $T$ is ergodic, we prove that the resulting norm is wMLUR and Gâteaux smooth. In particular, it is strictly convex. On the other hand, and without assuming ergodicity, we prove that the space $X:=(L_1(\mu),\nn{\cdot})$, and also its dual space $X^*=(L_{\infty}(\mu),\nn{\cdot}_{*})$, has the Daugavet property whenever $\mu$ is atomless. Therefore, an ergodic transformation produces a Banach space which is simultaneously wMLUR, Gâteaux smooth and has the Daugavet property.

The proof of the Daugavet property is based on decompositions into vectors with arbitrarily small supports together with suitable estimates, which is the standard way to prove that Lipschitz free-spaces or $L_1(\mu)$-spaces satisfy the DPr. We also show that the space $X$ satisfies the stronger operator Daugavet property (ODPr, for short). Finally, we observe that the bidual $X^{**}$ fails to have the Daugavet property, and that $X$ fails to be MLUR.

\section{Background}

Throughout the paper, all the Banach spaces will be considered either in the real or complex field. Let $X$ be a Banach space. We denote respectively by $B_X$ and $S_X$ the closed unit ball and the unit sphere of $X$. The symbol $X^*$ stands for the topological dual of $X$ and $\mathcal{L}(X)$ for the Banach space of all bounded linear operators from $X$ into $X$. If $x^*\in S_{X^*}$ and $\alpha>0$, we denote by \begin{equation*}
    S(B_X,x^*,\alpha) := \{x\in B_X:\re x^*(x) > 1 - \alpha \}
\end{equation*} the corresponding slice of $B_X$. For any sets $A$ and $B$, we set 
\begin{equation*}
    A \Delta B := (A \setminus B) \cup (B \setminus A).
\end{equation*}

The main definitions of the paper are the following ones. We refer to the monographs \cite{DGZ93,KMRZW25} for background. Weakly midpoint locally uniformly rotund spaces will be shortened to wMLUR.

\begin{definition} Let $X$ be a Banach space. We say that $X$ is
\begin{itemize}

\item[(a)] {\bf wMLUR} if, whenever $x \in S_X$ and $(x_n) \subseteq S_X$ satisfy $\|\frac{x + x_n}{2}\| \rightarrow 1$, then $x_n \stackrel{w}{\longrightarrow} x$;
\item[(b)] {\bf strictly convex} if, for every $x, y \in S_X$ with $x \neq y$, we have that $\|\frac{x+y}{2}\| < 1$;

\item[(c)] {\bf Gâteaux smooth} if, for every $x \in S_X$ and every $y \in X$, the following limit exists
\begin{equation*}
    \lim_{t\to 0}\frac{\|x+ty\|-\|x\|}{t}.
\end{equation*}
\end{itemize}
\end{definition}
It is immediate that every wMLUR norm is strictly convex. A Banach space $X$ is said to have the {\bf Daugavet property} (DPr, for short) if
\begin{equation*}
    \|I+T\|=1+\|T\|
\end{equation*}
for every rank-one operator $T\in\mathcal{L}(X)$. It is known that the space $X$ has the DPr if and only if, for every $x\in S_X$, every slice
$S\subset B_X$, and every $\eps>0$, there exists $y\in S$ such that $\|x+y\|>2-\eps$. It is clear that if $X^*$ has the DPr, then so does $X$, but the converse is false in general, as e.g. the dual of $C[0,1]$ is known to fail the DPr.

\section{Main results}
\subsection{Definition of the norm} Let $(\Omega, \Sigma, \mu)$ be a probability space and $T: \Omega \rightarrow \Omega$ be an {\bf invertible measure-preserving transformation}, that is a one-to-one and onto mapping $T:\Omega\to \Omega$ such that $T$ and $T^{-1}$ are measurable and $\mu(T^{-1}(A)) = \mu(A)$ for every $A \in \Sigma$. Throughout the rest of the text, $T: \Omega \rightarrow \Omega$ will denote an invertible measure-preserving transformation.  Then, let $(a_k)_{k \in \Z}$ be a sequence of positive real numbers such that 
\begin{equation*}
    \sum_{k \in \Z} a_k = 1 \ \ \ \mbox{and} \ \ \ \sum_{k \in \Z} \sqrt{a_k} < \infty.
\end{equation*}
For $f \in L_1(\mu)$, we consider the vector-valued function $Jf: \Omega \rightarrow \ell_2(\Z)$ given by
\begin{equation*}
    Jf(\omega) := ( \sqrt{a_k} f(T^k \omega) )_{k \in \Z}.
\end{equation*}
Then, we define
\begin{equation*}
    \nn{f} := \int_{\Omega} \|Jf(\omega)\|_2 d \mu(\omega) = \int_{\Omega} \left( \sum_{k \in \Z} a_k |f(T^k \omega)|^2 \right)^{1/2} d \mu (\omega).
\end{equation*}
It is straightforward to check that $\nn{\cdot}$ defines an equivalent norm on $L_1(\mu)$ with 
\begin{equation} \label{equivalence}
    \|f\|_1 \leq \nn{f} \leq \left( \sum_{k \in \Z} \sqrt{a_k} \right) \|f\|_1 
\end{equation}
for every $f \in L_1(\mu)$. Note that by a suitable choice of $(a_k)_{k\in\Z}$, we can make this norm arbitrarily close to the original $L_1$-norm. Also, let us note that 
\begin{equation*}
    \nn{f \chi_E} \leq \left( \sum_{k \in \Z} \sqrt{a_k} \right) \int_E |f| d \mu 
\end{equation*}
for every $f\in L_1(\mu)$ and $E\in\Sigma$, and hence $\nn{f \chi_E} \rightarrow 0$ whenever $\mu(E) \rightarrow 0$. We will also need the following fact.

\begin{fact} \label{erg-monotone} If $f, g \in L_1(\mu)$ satisfy $|f| \leq |g|$ almost everywhere on $\Omega$, then $\nn{f} \leq \nn{g}$. Moreover, if $\mu(\{ \omega: |f(\omega)| < |g(\omega)| \}) > 0$, then $\nn{f} < \nn{g}$.
\end{fact}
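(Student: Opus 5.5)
Both assertions come from pointwise monotonicity of the $\ell_2$-norm inside the integral, plus the fact that $T$ preserves null sets. Set $N:=\{\omega:|f(\omega)|>|g(\omega)|\}$, so $\mu(N)=0$. Since $T$ is an invertible measure-preserving transformation, every set $T^{-k}(N)$ is null. Hence the union $N':=\bigcup_{k\in\Z}T^{-k}(N)$ is also null, being countable.

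\textbf{Monotonicity.} For $\omega\notin N'$ and every $k\in\Z$ we have $T^k\omega\notin N$, so
\begin{equation*}
a_k|f(T^k\omega)|^2\leq a_k|g(T^k\omega)|^2 .
\end{equation*}
Summing over $k$ gives $\|Jf(\omega)\|_2\leq\|Jg(\omega)\|_2$ for almost every $\omega$. Integrating then yields $\nn{f}\leq\nn{g}$.

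\textbf{Strict inequality.} Let $A:=\{\omega:|f(\omega)|<|g(\omega)|\}$, so $\mu(A)>0$ by hypothesis. Taking the index $k=0$, we have $a_0>0$ since all $a_k$ are positive. So for $\omega\in A\setminus N'$ the $k=0$ term satisfies $a_0|f(\omega)|^2<a_0|g(\omega)|^2$, while every other term satisfies $\leq$. Because $Jg(\omega)\in\ell_2(\Z)$, the sums are finite for a.e.\ $\omega$, and this gives the strict inequality
\begin{equation*}
\|Jf(\omega)\|_2<\|Jg(\omega)\|_2 \quad\text{on } A\setminus N'.
\end{equation*}
This set has positive measure.

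Now $h(\omega):=\|Jg(\omega)\|_2-\|Jf(\omega)\|_2$ is a.e.\ nonnegative, strictly positive on a set of positive measure, and integrable. Both terms are integrable by \eqref{equivalence}, since $\nn{\cdot}$ is finite on $L_1(\mu)$. Therefore $\int_\Omega h\,d\mu>0$, because a nonnegative function with zero integral vanishes a.e. This is exactly $\nn{f}<\nn{g}$.

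\textbf{Technical points.} The only things to check are that $\omega\mapsto\|Jf(\omega)\|_2$ is measurable and that the null-set bookkeeping above is valid. Measurability holds because this function is a countable supremum of finite partial sums of measurable functions $f\circ T^k$. The null-set bookkeeping holds because $\mu(T^{-k}(N))=\mu(N)=0$ for every $k$.
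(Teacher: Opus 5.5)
Your proof is correct and follows essentially the same route as the paper: transfer the a.e.\ inequality to every iterate $T^k$ using that $\bigcup_{k\in\Z}T^{-k}(N)$ is null, compare the $\ell_2$-norms termwise, and use $a_0>0$ to get strictness on $\{|f|<|g|\}$. You are slightly more careful than the paper in two places: you note that $\|Jg(\omega)\|_2<\infty$ a.e., which is what lets a strict inequality in one term give a strict inequality of the sums, and you spell out why a nonnegative integrable function that is positive on a set of positive measure has positive integral.
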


\begin{proof} By assumption, there exists a null set $N \in\Sigma$ such that $|f(\omega)| \leq |g(\omega)|$ for every $\omega \in \Omega \setminus N$. Since $T$ is measure-preserving, for every $k \in \Z$, we have that $\mu(T^{-k} N) = \mu(N) = 0$. Hence, $\mu(\bigcup_{k \in \Z} T^{-k} N ) = 0$. Therefore, for almost every $\omega$, $T^k \omega \not\in N$ for every $k \in \Z$, that is, $|f(T^k \omega)| \leq |g(T^k \omega)|$ for every $k \in \Z$. Since $a_k > 0$ for every $k \in \Z$, integrating over $\Omega$ gives us the first part of the result. For the strict part, since $a_0 > 0$, for every $\omega \in \mu(\{ \omega: |f(\omega)| < |g(\omega)| \})$ outside the null set we have $a_0 |f(\omega)|^2 < a_0|g(\omega)|^2$ while $a_k |f(T^k \omega)|^2 \leq a_k |g(T^k \omega)|^2$ for every $k \in \N$. This means that $\|Jf(\omega)\|_2 < \|Jg(\omega)\|_2$ on a set of positive measure. Therefore, $\nn{f} < \nn{g}$.
\end{proof} 

Throughout the rest of this paper we denote $X$ as the Banach space $X:= (L_1(\mu), \nn{\cdot})$. Since $\mu$ is a probability measure, we can identify $X^*$ with $L_{\infty}(\mu)$ as a vector space. For $f \in L_1(\mu)$ and $\Phi \in L_{\infty}(\mu)$, we write 
\begin{equation*}
    \Phi(f) = \int_{\Omega}  \Phi f d \mu \ \ \ \mbox{and} \ \ \ \nn{\Phi}_{*} = \sup_{\nn{f} \leq 1} | \Phi(f)|.
\end{equation*}
Clearly, the dual norm satisfies 
\begin{equation*}
    \frac{1}{\sum_{k \in \Z} \sqrt{a_k}} \|\Phi\|_{\infty} \leq \nn{\Phi}_{*} \leq \|\Phi\|_{\infty} 
\end{equation*}
for every $\Phi \in L_{\infty}(\mu)$. Throughout the manuscript, we will use all these facts without any explicit reference.

\subsection{Strict convexity and smoothness are equivalent to ergodicity} In this subsection, we will prove that $(L_1(\mu), \nn{\cdot})$ is both strictly convex and Gâteaux smooth whenever the transformation $T: \Omega \rightarrow \Omega$ satisfies some additional ergodicity assumption. We have the following definition.

\begin{definition} The transformation $T$ is {\bf ergodic} if every $A \in \Sigma$ satisfying $\mu(A\Delta T^{-1}(A)) = 0$ also satisfies $\mu(A) = 0$ or $\mu(A) = 1$.
\end{definition}

We do not know if every probability measure space admits an ergodic transformation, but such mappings are known to exist in the case $\mu$ is the Lebesgue measure on $[0,1]$. We will use the following fact throughout the text. 

\begin{fact}
    Let $T:\Omega\to\Omega$ be an ergodic transformation and $A\in\Sigma$ with $\mu(A)>0$. Then, the set \begin{equation*}
        \bigcup_{k\in\Z}T^{-k}(A)
    \end{equation*} has full measure.
\end{fact}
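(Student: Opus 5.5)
Set $B:=\bigcup_{k\in\Z}T^{-k}(A)$. The plan is to show that $B$ is invariant under $T$ up to a null set and then invoke ergodicity.

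First, note that $B$ is measurable, since $T$ is measurable and $A \in \Sigma$. Next, compute its preimage:
\begin{equation*}
T^{-1}(B)=\bigcup_{k\in\Z}T^{-(k+1)}(A)=\bigcup_{j\in\Z}T^{-j}(A)=B .
\end{equation*}
This uses only that preimages commute with unions and that $T^{-1}\circ T^{-k}=T^{-(k+1)}$ as set maps; the bijectivity of $T$ makes the negative powers $T^{-k}$ with $k<0$ meaningful, since they are just $T^{|k|}$. Hence $\mu(B\Delta T^{-1}(B))=\mu(\emptyset)=0$.

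By the definition of ergodicity, $\mu(B)\in\{0,1\}$. Since $A=T^{0}(A)\subseteq B$, we have $\mu(B)\geq\mu(A)>0$. Therefore $\mu(B)=1$, that is, $B$ has full measure.

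There is no real obstacle here. The only point needing care is the reindexing $k+1\mapsto j$, which is a bijection of $\Z$; this is exactly why the union is taken over all of $\Z$ rather than over $\N$. Measure preservation of $T$ is not even needed beyond what ergodicity already encodes.
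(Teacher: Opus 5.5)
Your proof is correct and is essentially the paper's argument: set $B=\bigcup_{k\in\Z}T^{-k}(A)$, observe that $T^{-1}(B)=B$, and combine $A\subseteq B$ with ergodicity to get $\mu(B)=1$. One small precision: the terms with $k<0$ are images $T^{|k|}(A)$, so measurability of $B$ also uses that $T^{-1}$ is measurable, which is part of the paper's standing assumptions, and not only that $T$ is.
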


\begin{proof}
    Let $B:=\bigcup_{k\in\Z}T^{-k}(A)$. By definition, we have $T^{-1}(B)=B$. Furthermore, since $A\subset B$, then $\mu(B)>0$, so ergodicity of $T$ implies that $\mu(B)=1$. 
\end{proof}

In particular, let us note than in case $T$ is ergodic, we have that for every non-zero $f\in L_1(\mu)$, 
\begin{equation*}
    \bigcup_{k \in \Z} \{ \omega\in\Omega: f(T^k\omega) \not= 0 \}  
\end{equation*}
has full measure. Consequently, $Jf(\omega) \not=0$ almost everywhere. 

We have the following characterization.

\begin{theorem} \label{theorem-SC-Gateaux} Let $(\Omega, \Sigma, \mu)$ be a probability space and $T: \Omega \rightarrow \Omega$ be an invertible measure-preserving transformation. The following assertions are equivalent.

\begin{itemize}
\item[(1)] $T$ is ergodic.
\item[(2)] $\nn{\cdot}$ is strictly convex.
\item[(3)] $\nn{\cdot}$ is Gâteaux smooth.
\item[(4)] $X$ does not contain an isometric copy of $\ell_1^2$.
\end{itemize}
\end{theorem}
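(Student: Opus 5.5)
I will prove the cycle (1)$\Rightarrow$(2), (1)$\Rightarrow$(3), (2)$\Rightarrow$(4), (3)$\Rightarrow$(4) and (4)$\Rightarrow$(1). The implications into (4) are standard, since $\ell_1^2$ is neither strictly convex nor smooth. Indeed, an isometric copy spanned by $u,v$ with $\nn{\alpha u+\beta v}=|\alpha|+|\beta|$ gives a nontrivial segment $[u,v]$ in the sphere. It also gives a point $u$ at which the norm is not Gâteaux differentiable in direction $v$, because $t\mapsto \nn{u+tv}=1+|t|$.

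For (4)$\Rightarrow$(1), suppose $T$ is not ergodic. Then there is $A$ with $0<\mu(A)<1$ and $\mu(A\Delta T^{-1}A)=0$. Replacing $A$ by $\bigcap_{k}T^{-k}A$ (which differs from $A$ by a null set), we may assume $A$ is exactly $T$-invariant. Set $f=\chi_A/\nn{\chi_A}$ and $g=\chi_{A^c}/\nn{\chi_{A^c}}$. For every $\omega$, the orbit $\{T^k\omega\}$ lies entirely in $A$ or entirely in $A^c$. Hence $\|J(\alpha f+\beta g)(\omega)\|_2$ equals $|\alpha|\|Jf(\omega)\|_2$ on $A$ and $|\beta|\|Jg(\omega)\|_2$ on $A^c$. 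Integrating gives $\nn{\alpha f+\beta g}=|\alpha|+|\beta|$, which is an isometric copy of $\ell_1^2$.

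For (1)$\Rightarrow$(2), assume $T$ is ergodic and $\nn{f}=\nn{g}=\nn{(f+g)/2}=1$. The pointwise triangle inequality $\|J(f+g)(\omega)\|_2\le\|Jf(\omega)\|_2+\|Jg(\omega)\|_2$ and equality of the integrals force equality almost everywhere. In the Hilbert space $\ell_2(\Z)$ this means $Jf(\omega)$ and $Jg(\omega)$ are nonnegatively proportional wherever both are nonzero. By the remark after the Fact, $Jf,Jg\neq0$ a.e., since $f,g\neq0$. So $Jg(\omega)=\lambda(\omega)Jf(\omega)$ with $\lambda(\omega)>0$. The key step is to show $\lambda$ is a.e. constant. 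Comparing coordinates shifted by one uses $Jf(T\omega)_k=\sqrt{a_{k+1}/a_k}\cdots$, or more simply the coordinate identity $g(T^k\omega)=\lambda(\omega)f(T^k\omega)$ for all $k$. Applying this at $T\omega$ gives $\lambda(T\omega)=\lambda(\omega)$ on the set where some $f(T^k\omega)\ne0$, which is a.e. Ergodicity then makes $\lambda$ constant, so $g=\lambda f$, and the norms force $\lambda=1$. The main care needed is handling null sets so that these coordinate identities hold simultaneously along orbits, using invariance of null sets under $T$ as in Fact~\ref{erg-monotone}.

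For (1)$\Rightarrow$(3), fix $f\in S_X$ and $h\in X$. Since $Jf(\omega)\ne0$ a.e., the map $t\mapsto\|Jf(\omega)+tJh(\omega)\|_2$ is differentiable at $0$ a.e., with derivative $\re\langle Jh(\omega),Jf(\omega)\rangle/\|Jf(\omega)\|_2$. The difference quotients are bounded by $\|Jh(\omega)\|_2$, which is integrable. Dominated convergence then gives the two-sided derivative
\begin{equation*}
\int_\Omega \frac{\re\langle Jh(\omega),Jf(\omega)\rangle}{\|Jf(\omega)\|_2}\,d\mu(\omega).
\end{equation*}
In the complex case the same argument works in the real direction, which is the standard meaning of Gâteaux smoothness. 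I expect the main obstacle to be the constancy of $\lambda$ in (1)$\Rightarrow$(2); the remaining steps are routine.
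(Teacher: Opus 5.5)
Your proposal is correct, and outside (1)$\Rightarrow$(2) it matches the paper. The $\ell_1^2$ copy spanned by the indicators of a nontrivial invariant set and its complement, the passage from such a copy to the failure of (2) and (3), and the dominated convergence computation of the Gâteaux derivative are the paper's arguments. Replacing $A$ by $\bigcap_k T^{-k}A$ is just a tidier treatment of the null sets the paper handles with ``almost every $\omega$''.

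For strict convexity you take a different route. The paper splits $f\neq g$, $\nn{f}=\nn{g}=1$ into three cases. Either the supports differ on a set of positive measure. Or they agree and $\abs{f}<\abs{g}$ on a set of positive measure; then Fact~\ref{erg-monotone} and $\nn{f}=\nn{g}$ give a set of positive measure where $\abs{f}\geq\abs{g}>0$. Or $\abs{f}=\abs{g}$ but $f\neq g$. In each case the paper applies $\mu\bigl(\bigcup_k T^{-k}A\bigr)=1$ to suitable sets $A$ of positive measure. From one or two coordinates it reads off that $Jf(\omega)$ and $Jg(\omega)$ are not positively proportional, so the pointwise triangle inequality is strict almost everywhere.

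You instead analyse the equality case. Equality almost everywhere forces $Jg=\lambda Jf$ with $\lambda=\|Jg\|_2/\|Jf\|_2>0$ measurable. The coordinate identities at $\omega$ and $T\omega$ give $\lambda\circ T=\lambda$ almost everywhere, ergodicity makes $\lambda$ constant, and so $g=\lambda f$ with $\lambda=1$. Your argument is shorter, avoids both the case split and the monotonicity fact, and shows exactly where ergodicity enters.

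The price is that you use ergodicity in its functional form: almost everywhere invariant measurable functions are almost everywhere constant. The paper's set definition gives this by applying it to the sets $\{\lambda>c\}$, $c\in\mathbb{Q}$, and you should write that step out. The paper needs only the saturation fact.

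Finally, delete the unfinished clause about $Jf(T\omega)_k=\sqrt{a_{k+1}/a_k}\cdots$. The identity $g(T^k\omega)=\lambda(\omega)f(T^k\omega)$ for all $k$ is all you actually use.
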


\begin{proof} First, let us consider the case where $T$ is not ergodic. Then there exists $E\in\Sigma$ such that $0<\mu(E)<1$ and $\mu(E\Delta T^{-1}(E))=0$. Since $T$ is measure preserving, the latter simply means that $T\omega \in E$ for almost every $\omega\in E$ and $T\omega \in \Omega\setminus E$ for almost every $\omega \in \Omega\setminus E$. Iterating, we get that for almost every $\omega\in E$ , $T^k\omega \in E$ for every $k\in\Z$, and that for almost every $\omega \in \Omega\setminus E$, $T^k\omega \in \Omega\setminus E$ for every $k\in\Z$. Let $f:=\chi_E$ and $g:=\chi_{\Omega\setminus E}$. Since $0<\mu(E)<1$, we have that the functions $f$ and $g$ are non-zero vectors in $L_1(\mu)$. Furthermore, by the above, we have that the function $Jf$ is supported on $E$ while the function $Jg$ is supported on $\Omega\setminus E$. So for every $\alpha,\beta\in \K$, we have \begin{equation*}
    \nn{\alpha f+\beta g}=\abs{\alpha}\nn{f}+\abs{\beta}\nn{g},
\end{equation*} which means that the space $Y:=\spann\{f,g\}$ is isometrically isomorphic to the space $\ell_1^2$. In particular, $\nn{\cdot}$ is neither smooth nor strictly convex. 

It remains to prove that in the case where $T$ is ergodic, the norm $\nn{\cdot}$ is simultaneously strictly convex and Gâteaux smooth. We start with strict convexity. Let $f, g \in L_1(\mu)$ be such that $f\not= g$ and $\nn{f} = \nn{g} = 1$. We need to show that $\nn{f+g} < 2$. There are three main cases to consider. Assume first that $\mu(\supp f \setminus \supp g) > 0$. Since $\nn{g} = 1$, we have that $g \not= 0$ and $\mu(\supp g)  > 0$. Since $T$ is ergodic, the sets 
\begin{equation*}
    \bigcup_{k \in \Z} T^{-k}(\supp f \setminus \supp g) \ \ \ \mbox{and} \ \ \ \bigcup_{k \in \Z} T^{-k}(\supp g)
\end{equation*}
have full measure. This means that, for almost every $\omega$, one coordinate of $Jf(\omega)$ is nonzero while the corresponding coordinate of $Jg(\omega)$ is zero and also $Jg(\omega)$ has a nonzero coordinate. In particular, the vectors $Jf(\omega)$ and $Jg(\omega)$ are linearly independent, and strict convexity of $\ell_2(\Z)$ gives 
\begin{equation*}
    \|Jf(\omega) + Jg(\omega)\|_2 < \|Jf(\omega)\|_2 + \|Jg(\omega)\|_2
\end{equation*}
almost everywhere. Integration then yields $\nn{f+g} < 2$. The exact same argument applies when $\mu(\supp g \setminus \supp f) > 0$, so it only remains to consider now the case when $\supp f = \supp g$, up to a null set. Suppose that $\mu(\{ \omega: |f(\omega)| < |g(\omega)|\}) > 0$. Then the set 
\begin{equation*}
    \supp g \setminus \{ \omega\in\Omega: |f(\omega)| < |g(\omega)|\} 
\end{equation*}
also has positive measure. Indeed, otherwise, $|f| < |g|$ almost everywhere on $\supp g$ and outside $\supp g$, $f$ and $g$ both vanish since they have the same support. So $0 \leq |f| \leq |g|$ almost everywhere. By Fact \ref{erg-monotone}, this would imply that $1 = \nn{f} < \nn{g} = 1$, a contradiction. Ergodicity gives 
\begin{equation*}
    \mu \left( \bigcup_{k \in \Z} T^{-k} ( \{ \omega\in\Omega: |f(\omega)| < |g(\omega)| \}) \right) = 1 = \mu \left( \bigcup_{l \in \Z} T^{-l} ( \supp g \setminus \{ \omega\in\Omega: |f(\omega)| < |g(\omega)| \} ) \right).
\end{equation*}
Their intersection also has full measure. Therefore, for almost every $\omega$, there are $k, l \in \Z$ such that 
\begin{equation*}
|f(T^k\omega)| < |g(T^k\omega)| \ \ \ \mbox{and} \ \ \ |f(T^l\omega)| \geq |g(T^l\omega)| > 0.    
\end{equation*}
Thus, the $k$-th coordinates of $Jf(\omega)$ and $Jg(\omega)$ satisfy
\begin{equation*}
    \sqrt{a_k} |f(T^{k} \omega)| < \sqrt{a_k} |g(T^k \omega)| 
\end{equation*}
while the $l$-th coordinates satisfy
\begin{equation*}
    \sqrt{a_l} |f(T^l \omega)| \geq \sqrt{a_l} |g(T^l \omega)| > 0.
\end{equation*}
Therefore, $Jf(\omega)$ and $Jg(\omega)$ are linearly independent. So, strict convexity of $\ell_2(\Z)$ gives again 
\begin{equation*}
    \|Jf(\omega) + Jg(\omega)\|_2 < \|Jf(\omega)\|_2 + \|Jg(\omega)\|_2
\end{equation*}
almost everywhere. Integration gives $\nn{f + g} < 2$.

It remains to consider the case $\mu (\{ \omega\in\Omega: |f(\omega)| < |g(\omega)| \}) = 0$. Then, $|g| \leq |f|$ almost everywhere. On the other hand, $\nn{|f|} = \nn{f} = \nn{g} = \nn{|g|}$ and the strict part of Fact \ref{erg-monotone} give $|f| = |g|$ almost everywhere. Since $f\not=g$, we have that 
\begin{equation*}
    \mu( \{ \omega\in\Omega: f(\omega) \not= g(\omega) \} ) > 0.
\end{equation*}
Once again, ergodicity yields 
\begin{equation*}
    \mu \left( \bigcup_{k \in \Z} T^{-k} (\{ \omega\in\Omega: f(\omega) \not= g(\omega) \}) \right) = 1.
\end{equation*}
Also, since $|f| = |g|$ almost everywhere, $|f(T^k \omega)| = |g(T^k \omega)|$ for every $k \in \Z$ almost everywhere. So, $\|Jf(\omega)\|_2 = \|Jg(\omega)\|_2>0$ almost everywhere and there exists $k \in \Z$ such that the $k$-th coordinates of $Jf(\omega)$ and $Jg(\omega)$ are different. So, $Jf(\omega) \not= Jg(\omega)$, and we get once again 
\begin{equation*}
    \|Jf(\omega) + Jg(\omega)\|_2 < \|Jf(\omega)\|_2 + \|Jg(\omega)\|_2
\end{equation*}
almost everywhere. Integration again gives $\nn{f+g} < 2$, and strict convexity of $\nn{\cdot}$ follows.

 Finally, let us deal with the smoothness of $\nn{\cdot}$. Let $f\not= 0$. As previously noticed, ergodicity gives that
\begin{equation*}
    \bigcup_{k \in \Z} T^{-k} (\{ \omega\in\Omega: f(\omega) \not= 0 \} ) 
\end{equation*}
has full measure. Consequently, $Jf(\omega) \not=0$ almost everywhere. Since $J$ is linear, we have that 
\begin{equation*}
    J(f + th)(\omega) = Jf(\omega) + t Jh(\omega)
\end{equation*}
for every $h \in L_1(\mu)$ and every $\omega$. Thus, for $t \not=0$,
\begin{equation*}
    \frac{\nn{f+ th} - \nn{f}}{t} = \int_{\Omega} \frac{\|Jf(\omega) + t Jh(\omega)\|_2 - \|Jf(\omega)\|_2}{t}d\mu(\omega).
\end{equation*}
Now observe after multiplying the numerator by $\|Jf(\omega) + t Jh(\omega)\|_2 + \|Jf(\omega)\|_2$, we obtain
\begin{equation*}
    \frac{\|Jf(\omega) + t Jh(\omega)\|_2 - \|Jf(\omega)\|_2}{t} = \frac{2 \re \langle Jh(\omega), Jf(\omega)\rangle + t \|Jh(\omega)\|_2^2}{\|Jf(\omega) + tJh(\omega)\|_2 + \|Jf(\omega)\|_2},
\end{equation*}
and therefore 
\begin{equation*}
    \lim_{t \rightarrow 0} \frac{\|Jf(\omega) + t Jh(\omega)\|_2 - \|Jf(\omega)\|_2}{t} = \re \frac{\langle Jh(\omega), Jf(\omega)\rangle}{\|Jf(\omega)\|_2}.
\end{equation*}
By the reverse triangle inequality, we have that 
\begin{equation*}
    \left| \frac{\|Jf(\omega) + t Jh(\omega)\|_2 - \|Jf(\omega)\|_2}{t} \right| \leq \|Jh(\omega)\|_2.
\end{equation*}
So by the Dominated Convergence Theorem, we can pass the limit through the integral, and obtain \begin{equation*}
     \lim_{t \rightarrow 0} \frac{\nn{f + th} - \nn{f}}{t} = \int_\Omega \re \frac{\langle Jh(\omega), Jf(\omega)\rangle}{\|Jf(\omega)\|_2} d\mu(\omega).
\end{equation*} 
Thus, $\nn{\cdot}$ is Gâteaux smooth.
\end{proof}

It turns out that when $T$ is ergodic, we get a little more than strict convexity of the norm.

\begin{theorem}
     Let $(\Omega, \Sigma, \mu)$ be a probability space and $T: \Omega \rightarrow \Omega$ be an ergodic invertible measure-preserving transformation. Then $\nn{\cdot}$ is wMLUR.
\end{theorem}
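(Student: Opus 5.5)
The plan is to transfer the problem to the Bochner space $L_1(\mu;\ell_2(\Z))$ through the linear isometric embedding $J\colon X\to L_1(\mu;\ell_2(\Z))$. Since $\nn{f}=\|Jf\|_{L_1(\ell_2)}$, every $\Phi\in L_\infty(\mu)=X^*$ extends by Hahn--Banach to a functional on $L_1(\mu;\ell_2)$. Hence it suffices to show that $Jf_n\to Jf$ weakly in $L_1(\mu;\ell_2)$. In fact we only need to test against functionals of the form $F\mapsto\int_\Omega \langle F(\omega), \Psi(\omega)\rangle\,d\mu$ coming from $\Phi$.

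Fix $f,f_n\in S_X$ with $\nn{f+f_n}\to 2$, and write $u=Jf$ and $u_n=Jf_n$. By ergodicity, $u(\omega)\neq0$ almost everywhere, as noted after the Fact on ergodic transformations.

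\textbf{Step 1 (pointwise alignment).} The nonnegative functions
\begin{equation*}
\psi_n(\omega):=\|u(\omega)\|_2+\|u_n(\omega)\|_2-\|u(\omega)+u_n(\omega)\|_2
\end{equation*}
satisfy $\int\psi_n\,d\mu\to0$. Decompose $u_n(\omega)=c_n(\omega)\,u(\omega)/\|u(\omega)\|_2+v_n(\omega)$ with $v_n(\omega)\perp u(\omega)$. An elementary Hilbert-space estimate should give
\begin{equation*}
\psi_n\gtrsim \min\Bigl(\|v_n\|_2,\ \frac{\|v_n\|_2^2}{\|u\|_2+\|u_n\|_2}\Bigr)+(\text{negative part of } \re c_n).
\end{equation*}
I will restrict to the sets $\{\|u\|_2\geq\delta\}$, which have measure close to $1$ as $\delta\to0$, and use the fact that $\nn{g\chi_E}\to0$ as $\mu(E)\to0$, uniformly in the relevant sense. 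Arguing by subsequences, the goal is to deduce that $\int_{\{\|u\|\geq\delta\}}\|v_n\|_2\to0$ up to a small error, and similarly for the imaginary and negative parts of $c_n$. One remaining piece is the portion of $u_n$ sitting on small sets, which is handled by approximation. The outcome of Step 1: after removing an $L_1$-small error, $u_n\approx r_n\,u$, where $r_n:=c_n/\|u\|_2\geq0$ and $\int r_n\|u\|_2\,d\mu\to1$.

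\textbf{Step 2 (approximate invariance).} Reading $u_n\approx r_n u$ coordinatewise gives $f_n(T^k\omega)\approx r_n(\omega)f(T^k\omega)$ for every $k$. Comparing the coordinates at $\omega$ and at $T\omega$ shows that $r_n\circ T\approx r_n$ on the set where $f$ and $f\circ T$ do not vanish. The $a_k$-weighting is exactly what couples the different shifts.

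\textbf{Step 3 (use of ergodicity).} We want $r_n\to1$ weakly in $L_1(\|u\|_2\,d\mu)$. Testing against $\Phi$ amounts to showing $\int \Phi f\,(r_n-1)\,d\mu\to0$ along subsequences. Write $g:=\Phi f$ and replace $r_n$ by its ergodic averages: approximate invariance lets us move $r_n$ past the averaging operators $\frac1N\sum_{j<N} g\circ T^{-j}$. By the mean ergodic theorem these averages converge to $\int g\,d\mu$, because the invariant $\sigma$-algebra is trivial by ergodicity. The total mass $\int r_n\|u\|_2\to1$ then pins the constant, and one checks consistency with the normalization $\nn{f}=1$.

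I expect this Step 3 to be the main obstacle. The $r_n$ are only bounded in a weighted $L_1$, not uniformly integrable; this is exactly why MLUR fails, so convergence in measure cannot be expected. I will therefore first truncate $r_n$ at a level $M$, with the tail controlled through Step 1 and the absolute continuity estimate $\nn{f_n\chi_E}\leq(\sum\sqrt{a_k})\int_E|f_n|\,d\mu$. Only after truncation will the ergodic-average argument be run, at the level of the weak$^*$ cluster points in $L_\infty^*$ of the bounded truncations.
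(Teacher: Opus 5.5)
There is a genuine gap, and it lies in the target itself. You read the printed Definition (a) literally: $f,f_n\in S_X$ with $\nn{f+f_n}\to2$ should force $f_n\to f$ weakly. That condition is weak local uniform rotundity (wLUR), not wMLUR. For atomless $\mu$ (e.g.\ an irrational rotation of $[0,1]$) it is false for this norm: as the paper's closing remark notes, no space with the DPr is wLUR, and $X$ has the DPr by Theorem~\ref{theorem:space-has-the-DPr}.

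Here is a concrete counterexample. Take $f=\chi_\Omega$ (so $\nn{f}=1$), a set $E$ with $\mu(E)=1/2$, and $g_n\in S_X$ supported in $E$ with $\mu(\supp g_n)\to0$. The paper's lemma asserting $\nn{f+g_n}\to1+\nn{f}$ whenever $g_n\in S_X$ and $\mu(\supp g_n)\to0$ gives $\nn{f+g_n}\to2$. Yet testing against $\chi_{\Omega\setminus E}\in X^*$ gives $\int_{\Omega\setminus E}g_n\,d\mu=0\neq\frac12=\int_{\Omega\setminus E}f\,d\mu$.

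This is exactly where your plan breaks. Since $0\le\psi_n\le2\min(\|u\|_2,\|u_n\|_2)$, the condition $\int\psi_n\,d\mu\to0$ says nothing about $u_n$ on sets of vanishing measure. So $u_n$ may carry all its mass there, pointing in any direction. Neither the ``portion of $u_n$ on small sets'' in Step 1 nor the tail of $r_n$ in Step 3 can then be controlled. The bound $\nn{f_n\chi_E}\le(\sum_k\sqrt{a_k})\int_E|f_n|\,d\mu$ helps only if $(f_n)$ is uniformly integrable, which is precisely what fails.

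The statement the paper actually proves is the midpoint one: if $f\in S_X$ and $\nn{f+h_n}\to1$, $\nn{f-h_n}\to1$, then $h_n\to0$ weakly. The two-sided hypothesis is what supplies the missing uniform integrability. Pointwise, $2\|Jh_n(\omega)\|_2\le\|Jf(\omega)+Jh_n(\omega)\|_2+\|Jf(\omega)-Jh_n(\omega)\|_2$. Hence $\|Jh_n\|_2\le\|Jf\|_2+\frac12\varphi_n$ with $\varphi_n\ge0$ and $\int\varphi_n\,d\mu=\nn{f+h_n}+\nn{f-h_n}-2\to0$. Together with $\sqrt{a_0}|h_n|\le\|Jh_n\|_2$, this makes $(h_n)$ bounded and uniformly integrable in $L_1(\mu)$, hence relatively weakly compact by Dunford--Pettis.

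Any weak cluster point $h$ satisfies $\nn{f\pm h}\le1$ by weak lower semicontinuity, hence $\nn{f+h}=\nn{f-h}=1$. Strict convexity (Theorem~\ref{theorem-SC-Gateaux}) then forces $h=0$, and Eberlein--\v{S}mulian gives $h_n\to0$ weakly. Ergodicity enters only through strict convexity, so the alignment estimates, approximate invariance and mean ergodic theorem of your Steps 1--3 are not needed.
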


\begin{proof}
    Let $f \in S_X$ be given and suppose that $(h_n) \subseteq L_1(\mu)$ satisfies $\nn{f + h_n} \rightarrow 1$ and $\nn{f - h_n} \rightarrow 1$ as $n \rightarrow \infty$. For every $\omega \in \Omega$, we can write 
\begin{equation*}
    2 Jf(\omega) = (Jf(\omega) + J h_n(\omega)) + (Jf(\omega) - Jh_n(\omega))
\end{equation*}
which implies that 
\begin{equation} \label{wLUReq1}
    2\|Jf(\omega)\|_2 \leq \|Jf(\omega) + Jh_n(\omega)\|_2 + \|Jf(\omega) - Jh_n(\omega)\|_2. 
\end{equation}
Similarly, for every $\omega \in \Omega$, we have that 
\begin{equation*}
      2\|Jh_n(\omega)\|_2 \leq \|Jf(\omega) + Jh_n(\omega)\|_2 + \|Jf(\omega) - Jh_n(\omega)\|_2.   
\end{equation*}
This implies that 
\begin{equation} \label{wLUReq2}
    \|Jh_n(\omega)\|_2 \leq \|Jf(\omega)\|_2 + \frac{1}{2} \Big( \|Jf(\omega) + Jh_n(\omega)\|_2 + \|Jf(\omega) - Jh_n(\omega)\|_2 - 2 \|Jf(\omega)\|_2 \Big).
\end{equation}
The expression inside parentheses in (\ref{wLUReq2}) is non-negative (because of (\ref{wLUReq1})) and then 
\begin{equation}
\begin{aligned}
&\int_{\Omega}\|Jf(\omega)+Jh_n(\omega)\|_2\,d\mu(\omega)\\
&\quad+\int_{\Omega}\|Jf(\omega)-Jh_n(\omega)\|_2\,d\mu(\omega)\\
&\quad-2\int_{\Omega}\|Jf(\omega)\|_2\,d\mu(\omega) =\nn{f+h_n}+\nn{f-h_n}-2\nn{f}
\longrightarrow 0.
\end{aligned} \label{wLUReq3}
\end{equation}
as $n \rightarrow \infty$. The coordinate $Jh_n(\omega)$ indexed by zero is $\sqrt{a_0} h_n(\omega)$ and so $\sqrt{a_0}|h_n(\omega)| \leq \|Jh_n(\omega)\|_2$. By (\ref{wLUReq2}), for every measurable set $A \in \Sigma$, we get 
\begin{equation}
\begin{aligned}
\sqrt{a_0}\int_A |h_n|\,d\mu
&\leq \int_A \|Jh_n(\omega)\|_2\,d\mu(\omega)\\
&\leq \int_A \|Jf(\omega)\|_2\,d\mu(\omega)\\
&\quad +\frac12\int_{\Omega}
    \|Jf(\omega)+Jh_n(\omega)\|_2\,d\mu(\omega)\\
&\quad +\frac12\int_{\Omega}
    \|Jf(\omega)-Jh_n(\omega)\|_2\,d\mu(\omega)\\
&\quad -\int_{\Omega}\|Jf(\omega)\|_2\,d\mu(\omega).
\end{aligned} \label{wLUReq4}
\end{equation}
Since $\omega \mapsto \|Jf(\omega)\|_2$ belongs to $L_1(\mu)$, its integral is absolutely continuous. By (\ref{wLUReq3}), the sum of the last three terms in (\ref{wLUReq4}) goes to zero. This means that $(h_n)$ is uniformly integrable. Now, take $A = \Omega$ in (\ref{wLUReq4}) and we obtain 
\begin{equation*}
    \sqrt{a_0} \|h_n\|_1 \leq \nn{f} + \frac{1}{2} ( \nn{f+h_n} + \nn{f-h_n} - 2\nn{f})
\end{equation*}
and so $(h_n)$ is bounded in $L_1(\mu)$. By \cite[Theorem IV.8.9]{DunfordSchwartz}, $(h_n)$ is relatively $w$-compact in $L_1(\mu)$. Suppose then that a subsequence $(h_{n_j})$ converges weakly to $h$. Then,
\begin{equation*}
    \nn{f + h} \leq \liminf_j \nn{f + h_{n_j}} = 1 \ \ \ \mbox{and} \ \ \ 
    \nn{f - h} \leq \liminf_j \nn{f - h_{n_j}} = 1.
\end{equation*}
On the other hand, 
\begin{equation*}
    2 = \nn{2 f} \leq \nn{f + h} + \nn{f - h} \leq 2
\end{equation*}
which implies that $\nn{f + h} = \nn{f - h} = 1$. Strict convexity gives $f+h=f-h$, that is, $h = 0$. This means that every $w$-cluster point of $(h_n)$ is zero. Since $(h_n)$ is relatively weakly compact, Eberlein--Šmulian gives that $h_n \stackrel{w}{\longrightarrow} 0$.
\end{proof}

\subsection{The space satisfies the Daugavet property} 

Our goal in this subsection is to give a direct proof of the fact that the space $X$ satisfies the Daugavet property provided that the probability measure $\mu$ is atomless. Although the following result can be seen as a direct corollary of Theorems~\ref{theorem:dual-has-the-DPr} or \ref{theorem:X_has_ODP} that we will prove in the following two subsections, we chose to give an independent proof of it as a warm-up for the reader.  Notice that for the Daugavet property we do not need the extra assumption that the transformation $T: \Omega \rightarrow \Omega$ is ergodic.

\begin{theorem} \label{theorem:space-has-the-DPr} Let $(\Omega, \Sigma, \mu)$ be an atomless probability space and $T: \Omega \rightarrow \Omega$ be an invertible measure-preserving transformation. Then the space $X = (L_1(\mu), \nn{\cdot})$ has the DPr.
\end{theorem}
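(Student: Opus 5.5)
We will use the slice characterization of the DPr recalled in the Background section. Fix $g\in S_X$, a slice $S(B_X,\Phi,\alpha)$ with $\Phi\in L_\infty(\mu)$, $\nn{\Phi}_*=1$, and $\eps>0$. We look for $f$ in the slice with $\nn{g+f}>2-\eps$.

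The idea is the one used for $L_1$ and for Lipschitz-free spaces. We pick $f$ in the slice that is concentrated on a set of very small measure, far from the essential part of $g$. First choose any $f_0\in S(B_X,\Phi,\alpha/2)$. Since $\mu$ is atomless, partition $\Omega$ into sets $E_1,\dots,E_N$ with $\mu(E_i)\le 1/N$, and write $f_0=\sum_i f_0\chi_{E_i}$. Then
\begin{equation*}
1-\alpha/2<\re\Phi(f_0)=\sum_i \re\Phi(f_0\chi_{E_i})
\quad\text{and}\quad
\sum_i\nn{f_0\chi_{E_i}}\ge \nn{f_0}.
\end{equation*}
The crucial point is that the triangle inequality should be almost an equality here. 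We need $\sum_i \nn{f_0\chi_{E_i}}\le (1+\delta)\nn{f_0}$ for suitable partitions, or at least a convex combination argument that does not need this. With it, averaging gives some $i$ with $u_i:=f_0\chi_{E_i}\neq 0$ and $\re\Phi(u_i)/\nn{u_i}>1-\alpha$. Then $f:=u_i/\nn{u_i}$ lies in the slice and is supported on a set $E$ of measure at most $1/N$.

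Next we need $\nn{g+f}\approx \nn{g}+\nn{f}=2$ when $\mu(E)$ is tiny. The obstruction is that $J$ spreads $\supp f$ along the orbit: $Jf(\omega)$ lives on $\bigcup_k T^{-k}E$, which can have large measure, so $f$ and $g$ are not disjointly supported after applying $J$. We handle this by truncating. Choose $K$ with $\sum_{|k|>K}\sqrt{a_k}$ small. Up to small error, $Jf$ is supported on $F:=\bigcup_{|k|\le K}T^{-k}E$, with $\mu(F)\le (2K+1)\mu(E)$. We then need:
\begin{itemize}
\item[(i)] $\int_F\|Jg\|_2\,d\mu$ is small, which holds by absolute continuity once $\mu(E)$ is small with $K$ fixed;
\item[(ii)] $\int_F\|Jf\|_2\ge 1-\eta$, which follows from $\int_{\Omega\setminus F}\|Jf\|_2\le \sum_{|k|>K}\sqrt{a_k}\,\|f\|_1$ together with $\|f\|_1\le\nn{f}=1$.
\end{itemize}
Then $\nn{g+f}\ge \int_{\Omega\setminus F}(\|Jg\|_2-\|Jf\|_2)+\int_F(\|Jf\|_2-\|Jg\|_2)\ge 2-\eps$. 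The order of choices matters: first $K$ from $\eps$, then $\delta$ from the absolute continuity of $\|Jg\|_2$ so that $\mu(F)<\delta$ works, and only then $N$.

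The main obstacle is the splitting step, namely making $\sum_i\nn{f_0\chi_{E_i}}$ close to $\nn{f_0}$. We would first try to show that for fine partitions into pieces $E_i$ that are ``small along orbits'', $\|Jf_0\|_2$ is nearly additive. Near-additivity is not true in general: if $Jf_0(\omega)$ spreads over many coordinates, the $\ell_2$-sum is strictly subadditive. The fix we would attempt is to choose $f_0$ itself in the slice with a special structure. By density, take $f_0$ a simple function, and use $\Phi$-norming approximations of the form $f_0\approx \sum_j c_j\chi_{A_j}$. Then apply the splitting inside each $A_j$, using $\nn{\cdot}\ge\|\cdot\|_1$ and $\re\Phi(h)\le\nn{h}$ to control losses. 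If near-additivity still fails, the fallback is to prove directly that every slice contains elements of arbitrarily small support. We would do this by showing that $\nn{\Phi}_*$ is attained, up to $\alpha$, on functions $\chi_A/\nn{\chi_A}$ with $\mu(A)\to 0$, comparing $\nn{\chi_A}$ to $\mu(A)$ via the limit $\nn{\chi_A}/\mu(A)\to 1$ for small $A$ along orbits.
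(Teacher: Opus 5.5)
Your second half is sound. Once $f$ in the slice has $\mu(\supp f)$ tiny, truncating to $F=\bigcup_{|k|\le K}T^{-k}E$ and using (i) and (ii) does give $\nn{g+f}\geq 2-\eps$; this is essentially the paper's third lemma.

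The gap is the first half, which you flag yourself and never close: you do not show that every slice contains norm-one elements of arbitrarily small support. With sharp cutoffs $f_0\chi_{E_i}$, the near-additivity you need is false for natural partitions. Take $\Phi=f_0=\chi_\Omega$, which is simple with $\nn{\chi_\Omega}=1$. Let $T$ be an irrational rotation of the circle and let the $E_i$ be short arcs. Once the arcs are short enough, the points $T^k\omega$ with $|k|\le m$ lie in pairwise distinct arcs for every $\omega$. Hence $\sum_i\nn{\chi_{E_i}}\geq\sum_{|k|\le m}\sqrt{a_k}$ for every $m$, so $\sum_i\nn{\chi_{E_i}}\to\sum_{k\in\Z}\sqrt{a_k}>1$. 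Since slices can be arbitrarily thin, the averaging step breaks down, and restricting to simple $f_0$ changes nothing. The fallback is also unsupported. For the same short arcs $A$ one gets $\nn{\chi_A}/\mu(A)\to\sum_k\sqrt{a_k}$, not $1$. The ratio is close to $1$ only when $\mu(T^{-k}A\Delta A)\ll\mu(A)$ for all relevant $k$, i.e.\ for unions of long orbit segments. Producing such sets needs a Rokhlin-tower argument, and aperiodicity of $T$, which you do not supply. You also give no reason why a general $\Phi$ should be almost normed by such normalized indicators.

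The idea you are missing is to split with a partition of unity that varies slowly along orbits, instead of with indicators; this is the paper's splitting lemma (Lemma~\ref{lemma1:dual-has-the-DPr}). The paper first fixes $m$ with $(\sum_{|k|>m}\sqrt{a_k})\|h\|_1$ small, then $N>m$, then a partition $B_1,\dots,B_q$ of $\Omega$ with $\mu(B_i)<\rho/N$, and sets
\begin{equation*}
p_i:=\frac1N\sum_{j=0}^{N-1}\chi_{B_i}\circ T^j,\qquad h_i:=hp_i .
\end{equation*}
These satisfy:
\begin{itemize}
\item[(a)] $\sum_ip_i=1$, so $\sum_i h_i=h$;
\item[(b)] $\supp h_i\subset\bigcup_{j<N}T^{-j}B_i$, which has measure $<\rho$;
\item[(c)] by a telescoping count, $\sum_i|p_i(T^k\omega)-p_i(\omega)|\le 2|k|/N$ for $|k|\le m$.
\end{itemize}
Writing $J_mh_i(\omega)=p_i(\omega)J_mh(\omega)+\bigl(\sqrt{a_k}\,h(T^k\omega)(p_i(T^k\omega)-p_i(\omega))\bigr)_{|k|\le m}$ and using (c) gives $\sum_i\nn{h_i}_m\le\nn{h}_m+\frac{2\|h\|_1}{N}\sum_{|k|\le m}|k|\sqrt{a_k}$. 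The tails $|k|>m$ cost at most $(\sum_{|k|>m}\sqrt{a_k})\|h\|_1$, so $\sum_i\nn{h_i}<1+\eta$. The pieces overlap in support, and that is exactly what makes the $\ell_2$-norms nearly additive. Your convex-combination step then applies verbatim, and together with your second half gives the theorem for every invertible measure-preserving $T$, with no Rokhlin lemma needed.
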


In order to prove Theorem \ref{theorem:dual-has-the-DPr}, we need several lemmas. From now on, we always assume that $\mu$ is an atomless probability measure. Let us start by introducing some useful notation. For $m \in \N$ and $f\in L_1(\mu)$, let us set 
\begin{equation*}
    J_m f(\omega) := (\sqrt{a_k} f (T^k \omega))_{|k| \leq m}
\end{equation*}
for every $\omega \in \Omega$. Also, let us set \begin{equation*}
    \nn{f}_m:= \int_{\Omega} \|J_m f(\omega)\|_2 d \mu(\omega).
\end{equation*} By the reverse triangle inequality, we have that 
\begin{equation} \label{inequality-1}
    0 \leq \nn{f} - \nn{f}_m \leq \left( \sum_{|k| > m} \sqrt{a_k} \right) \|f\|_1 
\end{equation}
for every $f \in L_1(\mu)$. In particular, $\nn{\cdot}_m$ converges uniformly to $\nn{\cdot}$ on bounded subsets of $X$. We need the following key splitting lemma, which allows us to decompose any unit sphere element into an almost convex combination of elements with small supports. 

\begin{lemma} \label{lemma1:dual-has-the-DPr} For every $h \in S_X$ and every $\rho, \eta > 0$, there exist $h_1, \ldots, h_q \in L_1(\mu)$ such that
\begin{equation*} 
h = \sum_{i=1}^q h_i, \ \ \ \mu(\supp h_i) < \rho \ \ \ \mbox{and} \ \ \ 
\sum_{i=1}^q \nn{h_i} < 1 + \eta.
\end{equation*}
\end{lemma}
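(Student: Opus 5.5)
The idea is to reduce to the truncated norms $\nn{\cdot}_m$ and to choose a partition of $\Omega$ so fine that it also controls the orbit points $T^k\omega$ with $|k|\leq m$. For such a partition the $\ell_2$-norms of the vectors $J_m h_i(\omega)$ will be nearly additive.

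First fix $m$ so that $\left(\sum_{|k|>m}\sqrt{a_k}\right)\|h\|_1<\eta/2$. By \eqref{inequality-1}, for any decomposition $h=\sum_i h_i$ into functions with pairwise disjoint supports we then have
\begin{equation*}
\sum_i\nn{h_i}\leq \sum_i\nn{h_i}_m+\Big(\sum_{|k|>m}\sqrt{a_k}\Big)\sum_i\|h_i\|_1 ,
\end{equation*}
and $\sum_i\|h_i\|_1=\|h\|_1$. It therefore suffices to make $\sum_i\nn{h_i}_m<\nn{h}_m+\eta/2\leq 1+\eta/2$. We will take $h_i=h\chi_{A_i}$ for a finite measurable partition $(A_i)_{i=1}^q$ of $\Omega$ with $\mu(A_i)<\rho$. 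Such partitions exist because $\mu$ is atomless.

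Next we choose the partition. Fix a small $\delta>0$. Since $\mu$ is atomless, partition $\Omega$ into sets $B_1,\dots,B_p$ with $\mu(B_j)<\delta$. For each $\omega$ and each $k$ with $|k|\leq m$, the point $T^k\omega$ lies in some $B_j$. For a given $i$, the vector $J_m h_i(\omega)$ has nonzero $k$-th coordinate only if $T^k\omega\in A_i$.

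The difficulty is to get $\sum_i\|J_mh_i(\omega)\|_2\approx \|J_mh(\omega)\|_2$ for most $\omega$. This holds when, for most $\omega$, all but one of the orbit points $T^k\omega$, $|k|\leq m$, fall outside $\bigcup_{i} A_i$ restricted appropriately. That is impossible in general, since each point lies in some $A_i$. The correct mechanism is instead that for most $\omega$ the $2m+1$ orbit points lie in \emph{distinct} cells. Then $\sum_i\|J_mh_i(\omega)\|_2=\sum_{|k|\le m}\sqrt{a_k}|h(T^k\omega)|$, which is the $\ell_1$-sum and not the $\ell_2$-sum. So fine partitions are the wrong way round.

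Instead we should take the cells $A_i$ to be \emph{$T$-adapted towers}. By Rokhlin's lemma (if $T$ is aperiodic; periodic parts are handled separately, since on them the atomless measure lets us split orbits directly), there is a set $B$ such that $B,TB,\dots,T^{N-1}B$ are disjoint and cover $\Omega$ up to measure $\delta$, with $N\gg m$. Split $B$ into small pieces $B_1,\dots,B_r$ using atomlessness, and set $A_{j}=\bigcup_{l<N}T^lB_j$, plus the small remainder. Each $A_j$ has measure at most $\mu(B_j)N$, which is made $<\rho$ by choosing the $B_j$ small.

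For $\omega$ in $T^lB_j$ with $m\leq l<N-m$, all orbit points $T^k\omega$, $|k|\leq m$, lie in $A_j$. Hence $J_mh_i(\omega)=J_mh(\omega)$ for $i=j$ and $0$ otherwise. The exceptional $\omega$ form a set of measure at most about $2m/N+\delta$. On that set we use the bound $\sum_i\|J_mh_i(\omega)\|_2\leq\sum_{|k|\le m}\sqrt{a_k}|h(T^k\omega)|$, whose integral over a set of small measure is small by uniform integrability of the finitely many functions $h\circ T^k$.

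The main obstacle is this tower construction. It requires the Rokhlin lemma for non-ergodic, possibly partially periodic $T$. It also requires that the error integral vanish as $N\to\infty$, $\delta\to 0$.
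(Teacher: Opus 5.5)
Your reduction to $\nn{\cdot}_m$ is correct. So is your diagnosis that an arbitrary fine partition only yields the $\ell_1$-sum, which means the pieces must be almost constant along the orbit windows $\{T^k\omega:|k|\le m\}$. Once a Rokhlin tower is available, your estimate is complete. The good set $\bigcup_j\bigcup_{m\le l<N-m}T^lB_j$ contributes exactly $\nn{h}_m$, and the bad set has measure at most $2m/N+\delta$. Since $m$ is fixed before $N$ and $\delta$, the integral of $\sum_{|k|\le m}\sqrt{a_k}|h\circ T^k|$ over the bad set is small, so the error integral is not an issue.

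The genuine gap is the existence of the towers, which you leave open. The lemma is stated for an arbitrary atomless probability space and an arbitrary invertible measure-preserving $T$, with no ergodicity or aperiodicity; for example $T=\mathrm{id}$, or $T$ of period $2$, are allowed. Rokhlin's lemma needs aperiodicity. Your treatment of the periodic part ($T$-invariant pieces obtained by splitting a fundamental domain) needs a measurable decomposition into period classes and measurable transversals. On $[0,1]$ these can be imported from standard ergodic theory. In a general $(\Omega,\Sigma,\mu)$, however, the sets $\{\omega:T^p\omega=\omega\}$ and fundamental domains need not be measurable, so the structure theory would have to be redone at the level of the measure algebra. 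As it stands, your argument is conditional on an unproved structural result.

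The missing idea is that the statement does not ask for disjoint supports. Dropping disjointness gives a ``soft'' tower that exists for every $T$. The paper takes a partition $B_1,\dots,B_q$ with $\mu(B_i)<\rho/N$ and sets $p_i:=\frac1N\sum_{j=0}^{N-1}\chi_{B_i}\circ T^j$ and $h_i:=hp_i$. This has the following properties:
\begin{itemize}
\item $p_i\ge 0$ and $\sum_ip_i=1$, so $\sum_ih_i=h$ and $\sum_i\|h_i\|_1=\|h\|_1$.
\item $\supp h_i\subseteq\bigcup_{j<N}T^{-j}B_i$, which has measure $<\rho$.
\item Telescoping gives $\sum_i|p_i(T^k\omega)-p_i(\omega)|\le 2|k|/N$ for \emph{every} $\omega$ and every $|k|\le m<N$.
\end{itemize}
Now write $J_mh_i(\omega)=p_i(\omega)J_mh(\omega)+\big(\sqrt{a_k}\,h(T^k\omega)(p_i(T^k\omega)-p_i(\omega))\big)_{|k|\le m}$ and sum over $i$. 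This yields $\sum_i\nn{h_i}_m\le\nn{h}_m+\frac{2\|h\|_1}{N}\sum_{|k|\le m}|k|\sqrt{a_k}$. That is exactly the estimate you wanted, obtained with no Rokhlin lemma and no periodic/aperiodic decomposition.
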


\begin{proof} Choose $m \in \N$ such that the first inequality below holds true and then $N > m$ such that the second inequality holds
\begin{equation*}
\nn{h}-\nn{h}_m\leq \left( \sum_{|k| > m} \sqrt{a_k} \right)\|h\|_1 < \frac{\eta}{2} \ \ \ \mbox{and}  \ \ \ \frac{2 \|h\|_1}{N} \sum_{|k| \leq m} |k| \sqrt{a_k} < \frac{\eta}{2}.
\end{equation*}
 Since $\mu$ is atomless, there exists a finite measurable partition $(B_1,\dots,B_q)$ of $\Omega$ such that $\mu(B_i) < \rho/N$ for every $i = 1,\ldots, q$ (see, for instance, \cite[Theorem 1.12.9 and Corollary 1.12.10]{Bog07}). Now, let us define $p_i: \Omega \rightarrow [0,1]$ by 
 \begin{equation*}
     p_i(\omega) := \frac{1}{N} \sum_{j=0}^{N-1} \chi_{B_i}(T^j \omega) 
 \end{equation*}
 for every $\omega \in \Omega$ and $h_i := h p_i$ for $i=1,\ldots, q$. Notice that $p_i \geq 0$ for every $i = 1, \ldots, q$. Also, since the sets $B_1, \ldots, B_q$ form a partition of $\Omega,$ for every $\omega \in \Omega$, \begin{equation*}
     \sum_{i=1}^q \chi_{B_i}(\omega) = 1
 \end{equation*} and thus 
 \begin{equation*}
     \sum_{i=1}^q p_i(\omega) = \frac{1}{N} \sum_{j=0}^{N-1} \sum_{i=1}^q \chi_{B_i}(T^j \omega) = \frac{1}{N} \sum_{j=0}^{N-1} 1 = 1.
 \end{equation*}
 So, \begin{equation*}
     \sum_{i=1}^q h_i = h \sum_{i=1}^q p_i = h.
 \end{equation*} Suppose that $h_i(\omega) \not=0$. Since $h_i(\omega) = h(\omega)p_i(\omega)$, we must have $h(\omega) \not=0$ and $p_i(\omega) > 0$. The condition $p_i(\omega) > 0$ means that there exists $j \in \{0, \ldots, N-1\}$ such that $\chi_{B_i}(T^j \omega) = 1$, that is, $T^j \omega \in B_i$ which is equivalent to say that $\omega \in T^{-j}(B_i)$. So,
 \begin{equation*}
     \supp h_i \subset \supp h \cap \bigcup_{j=0}^{N-1} T^{-j} (B_i).
 \end{equation*}
Since $T$ is measure preserving, $\mu(T^{-j} (B_i)) = \mu(B_i)$, and it follows that 
\begin{equation*}
    \mu(\supp h_i) \leq \mu \left( \bigcup_{j=0}^{N-1} T^{-j} (B_i) \right) \leq \sum_{j=0}^{N-1} \mu(T^{-j} (B_i)) < N \cdot \frac{\rho}{N} = \rho
\end{equation*}
for every $i=1,\ldots, q$. Now, for every $i=1,\ldots, q$, notice that 
\begin{equation*}
    p_i(T^k \omega) = \frac{1}{N} \sum_{j=0}^{N-1} \chi_{B_i} (T^{j+k} \omega)
\end{equation*}
for every $k \in \Z$. Assume first that $0 \leq k \leq m < N$. Then, for every $i=1,\ldots, q$,
\begin{equation*}
    p_i(T^k \omega) - p_i(\omega) = \frac{1}{N} \left( \sum_{j=N}^{N+k-1} \chi_{B_i} (T^j \omega) - \sum_{j=0}^{k-1} \chi_{B_i} (T^j \omega) \right)
\end{equation*}
which implies that 
\begin{equation*}
    \sum_{i=1}^q |p_i(T^k \omega) - p_i(\omega)| \leq \frac{1}{N} \sum_{i=1}^q \left( \sum_{j=N}^{N+k-1} \chi_{B_i}(T^j \omega) + \sum_{j=0}^{k-1} \chi_{B_i} (T^j \omega) \right) 
\end{equation*}
and since $B_1, \ldots, B_q$ is a partition, $\sum_{i=1}^q \chi_{B_i}(T^j \omega) = 1$ for every $j$. So, 
\begin{equation*}
    \sum_{i=1}^q |p_i(T^k \omega) - p_i(\omega)| \leq \frac{2k}{N}
\end{equation*}
for every $0 \leq k \leq m$. If $k < 0$, the same argument applies. So,
\begin{equation*}
    \sum_{i=1}^q |p_i(T^k \omega) - p_i(\omega)| \leq \frac{2|k|}{N}
\end{equation*}
for every $\omega \in \Omega$ and for every $|k| \leq m$. For every $|k| \leq m$, $h_i(T^k \omega) = h(T^k \omega) p_i(T^k \omega)$ and then $J_m h_i(\omega) = (\sqrt{a_k} h(T^k \omega) p_i(T^k \omega))_{|k| \leq m}$. We add and subtract $p_i(\omega)$ and write 
\begin{equation*}
    J_m h_i(\omega) = p_i(\omega) J_m h(\omega) + (\sqrt{a_k} h(T^k \omega) (p_i(T^k \omega) - p_i(\omega)))_{|k| \leq m}.
\end{equation*}
By the triangle inequality and the inequality we have obtained before, we get that 
\begin{equation*}
    \sum_{i=1}^q \nn{h_i}_m \leq \nn{h}_m + \frac{2 \|h\|_1}{N} \sum_{|k| \leq m} |k| \sqrt{a_k}
\end{equation*}
(notice that we have used that $\| \cdot \|_2 \leq \|\cdot\|_1$ above). Now, using (\ref{inequality-1}), we get 
\begin{equation*}
    \sum_{i=1}^q \nn{h_i} \leq \sum_{i=1}^q \nn{h_i}_m + \left( \sum_{|k| > m} \sqrt{a_k} \right) \sum_{i=1}^q \|h_i\|_1.
\end{equation*}
Since $\sum_{i=1}^q \|h_i\|_1 = \|h\|_1$ and $\nn{h}_m\leq \nn{h} = 1$, we obtain
\begin{equation*}
    \sum_{i=1}^q \nn{h_i} \leq 1 + \frac{2\|h\|_1}{N} \sum_{|k| \leq m} |k| \sqrt{a_k} + \left( \sum_{|k| > m} \sqrt{a_k} \right) \|h\|_1 
    < 1 + \frac{\eta}{2} + \frac{\eta}{2} = 1 + \eta.
\end{equation*}
\end{proof}

As a corollary, we immediately get the following.

\begin{lemma} \label{lemma2:dual-has-the-DPr-first-step-induction} Let $\Phi \in S_{X^*}$, $0 < \alpha < 1$ and $\rho > 0$ be given. Then, there exists $y \in S_X$ such that 
\begin{equation*}
    \re \Phi(y) > 1 - \alpha \ \ \ \mbox{and} \ \ \ \mu(\supp y) < \rho.
\end{equation*}
\end{lemma}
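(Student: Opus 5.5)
The plan is to deduce the statement from Lemma~\ref{lemma1:dual-has-the-DPr} by an averaging argument. Since $\nn{\Phi}_* = 1$, I first choose $h \in S_X$ with $\re \Phi(h) > 1 - \alpha/2$. Then I fix a small $\eta>0$, to be determined in terms of $\alpha$, and apply Lemma~\ref{lemma1:dual-has-the-DPr} to $h$ with the given $\rho$ and this $\eta$. This yields $h_1,\dots,h_q$ with $h=\sum_i h_i$, $\mu(\supp h_i)<\rho$ and $\sum_i \nn{h_i} < 1+\eta$.

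Next, discard the indices with $h_i = 0$; they contribute nothing to either sum. Since
\begin{equation*}
1-\alpha/2 < \re\Phi(h) = \sum_i \re \Phi(h_i),
\end{equation*}
the pigeonhole principle gives an index $i$ with
\begin{equation*}
\re\Phi(h_i) > \frac{1-\alpha/2}{1+\eta}\nn{h_i}.
\end{equation*}
Indeed, otherwise summing over $i$ would give $\re\Phi(h) \le \frac{1-\alpha/2}{1+\eta}\sum_i\nn{h_i} < 1-\alpha/2$, a contradiction. For this index $h_i \neq 0$, and I set $y := h_i/\nn{h_i} \in S_X$. Then $\supp y = \supp h_i$, so $\mu(\supp y)<\rho$, and
\begin{equation*}
\re\Phi(y) > \frac{1-\alpha/2}{1+\eta}.
\end{equation*}

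Finally, I choose $\eta$ small enough that $(1-\alpha/2)/(1+\eta) > 1-\alpha$, for instance $\eta = \alpha/2$, which works because $(1-\alpha)(1+\alpha/2) = 1-\alpha/2-\alpha^2/2 < 1-\alpha/2$. The only mildly delicate point is the weighted pigeonhole step: I compare the ratios $\re\Phi(h_i)/\nn{h_i}$ with a threshold rather than comparing the values $\re\Phi(h_i)$ themselves. This handles the fact that the pieces $h_i$ have different norms. Everything else follows directly from Lemma~\ref{lemma1:dual-has-the-DPr}.
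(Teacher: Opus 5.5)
Your proof is correct and follows essentially the same approach as the paper: pick $h\in S_X$ with $\re\Phi(h)>1-\alpha/2$, split it with Lemma~\ref{lemma1:dual-has-the-DPr}, and find by a weighted pigeonhole argument a piece $h_i$ with $\re\Phi(h_i)$ large relative to $\nn{h_i}$. The only difference is bookkeeping: the paper takes $\eta=\frac{\alpha}{2(1-\alpha)}$ and uses the threshold $(1-\alpha)\nn{h_i}$ directly, while you take $\eta=\alpha/2$ and the threshold $\frac{1-\alpha/2}{1+\eta}\nn{h_i}$.
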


\begin{proof} Choose $h \in S_X$ such that $\re \Phi(h) > 1 - \frac{\alpha}{2}$. Applying Lemma \ref{lemma1:dual-has-the-DPr}, there are $h_1, \ldots, h_q \in X$ such that $h = \sum_{i=1}^q h_i$, $\mu(\supp h_i) < \rho$ and

\begin{equation*}
    \sum_{i=1}^q \nn{h_i} < 1 + \frac{\alpha}{2(1-\alpha)}.
\end{equation*}
Notice that this implies that there exists $i \in \{1, \ldots, q\}$ such that $\re \Phi(h_i) > (1-\alpha) \nn{h_i}$. In particular, $h_i \not=0$, $y:= \frac{h_i}{\nn{h_i}} \in S_X$ is well-defined, and $\re \Phi(y) > 1 - \alpha$. Moreover, $\mu (\supp y) = \mu ( \supp h_i) < \rho$.
\end{proof}

With this lemma at hand, it only remains to prove the following. 

\begin{lemma}
    Let $(g_n)\subset S_X$ be such that $\mu(\supp(g_n))\to 0$. Then, for every $f\in L_1(\mu)$, we have \begin{equation*}
       \lim_{n} \nn{f+g_n}=1+\nn{f}.
    \end{equation*}
\end{lemma}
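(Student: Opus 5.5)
The upper bound $\nn{f+g_n}\leq \nn{f}+\nn{g_n}=1+\nn{f}$ is just the triangle inequality, so the plan is to prove $\liminf_n \nn{f+g_n}\geq 1+\nn{f}$. Write $A_n:=\supp g_n$ and $E_n:=\bigcup_{|k|\leq m}T^{-k}(A_n)$ for a fixed $m$, so that $\mu(E_n)\leq(2m+1)\mu(A_n)\to 0$. The idea is that $Jg_n$ is concentrated (up to the tail $|k|>m$) on $E_n$, while $Jf$ restricted to $E_n$ has small integral. Since $E_n$ is small, we can then split the integral into its parts over $E_n$ and $\Omega\setminus E_n$ and add the contributions almost independently.

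Given $\eps>0$, I first use (\ref{inequality-1}) to choose $m$ with $\sum_{|k|>m}\sqrt{a_k}$ small. For $\omega\notin E_n$, every coordinate $|k|\leq m$ of $Jg_n(\omega)$ vanishes. Hence
\begin{equation*}
\int_{\Omega\setminus E_n}\|Jg_n(\omega)\|_2\,d\mu(\omega)\leq \Big(\sum_{|k|>m}\sqrt{a_k}\Big)\|g_n\|_1\leq \Big(\sum_{|k|>m}\sqrt{a_k}\Big),
\end{equation*}
using $\|g_n\|_1\leq\nn{g_n}=1$. Therefore $\int_{E_n}\|Jg_n\|_2\,d\mu\geq 1-\delta_m$, where $\delta_m:=\sum_{|k|>m}\sqrt{a_k}$.

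On the other hand, $\omega\mapsto\|Jf(\omega)\|_2$ is integrable and $\mu(E_n)\to0$, so by absolute continuity $\int_{E_n}\|Jf\|_2\,d\mu\to 0$. Now I estimate pointwise, using the reverse triangle inequality in $\ell_2(\Z)$ in two different directions. On $E_n$ I use $\|Jf+Jg_n\|_2\geq \|Jg_n\|_2-\|Jf\|_2$. On $\Omega\setminus E_n$ I use $\|Jf+Jg_n\|_2\geq \|Jf\|_2-\|Jg_n\|_2$. Integrating gives
\begin{equation*}
\nn{f+g_n}\geq (1-\delta_m)-\int_{E_n}\|Jf\|_2\,d\mu+\nn{f}-\int_{E_n}\|Jf\|_2\,d\mu-\delta_m .
\end{equation*}
Letting $n\to\infty$ yields $\liminf_n\nn{f+g_n}\geq 1+\nn{f}-2\delta_m$, and then letting $m\to\infty$ finishes the proof.

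The only delicate point, and the main obstacle, is the localization step. One must check that $\supp g_n$ being small controls where $Jg_n$ lives. This requires truncating at $|k|\leq m$ and using measure preservation, which gives $\mu(T^{-k}A_n)=\mu(A_n)$. The tail is then bounded via $\|g_n\|_1\leq 1$. Once this is in place, the rest is routine. Note that the argument does not need ergodicity or the atomless hypothesis.
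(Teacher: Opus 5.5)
Your proof is correct and follows essentially the same route as the paper: your set $E_n$ is exactly the paper's $\Omega_2^{(n)}$, and both arguments split $\Omega$ into this small set and its complement and apply the reverse triangle inequality pointwise before integrating. The only difference is bookkeeping of the tail. The paper works with the truncated norm $\nn{\cdot}_m$, for which $J_m g_n$ vanishes identically off $\Omega_2^{(n)}$, and then returns to $\nn{\cdot}$ via (\ref{inequality-1}). You instead keep the full $J$ and bound the tail of $Jg_n$ off $E_n$ directly by $\sum_{|k|>m}\sqrt{a_k}\,\|g_n\|_1$.
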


\begin{proof}
    Fix $f\in L_1(\mu)$. For every $n\in\N$, let $B_n:=\supp(g_n)$ and $C_n:= \Omega\setminus B_n$. Fix $\eps>0$, and take $m\in\N$ such that for every $h\in \max\{1,\nn{f}\} B_X$, we have $\nn{h}-\nn{h}_m<\eps$. For every $n\in\N$, write \begin{equation*}
        \Omega_1^{(n)}:=\bigcap_{k=-m}^m\{\omega\in\Omega\colon T^k\omega\in C_n\},
    \end{equation*} and \begin{equation*}
        \Omega_2^{(n)}:=\Omega\setminus \Omega_1^{(n)}=\bigcup_{k=-m}^m\{\omega\in\Omega\colon T^k\omega\in B_n\}.
    \end{equation*} Note that since $T$ is measure preserving, we have \begin{equation*}
        \mu(\Omega_2^{(n)})\leq (2m+1)\mu(B_n)\to 0
    \end{equation*} as $n$ goes to infinity. Since $J_mg_n=0$ on $\Omega_1^{(n)}$, we have \begin{equation*}
        \nn{f+g_n}_m\geq \int_{\Omega_1^{(n)}}\norm{J_m(f)}_2d\mu(\omega)+\int_{\Omega_2^{(n)}}\norm{J_m(g_n)}_2d\mu(\omega)-\int_{\Omega_2^{(n)}}\norm{J_m(f)}_2d\mu(\omega).
    \end{equation*} Observe that since $\mu(\Omega_1^{(n)})\to 1$, we have that the first integral above converges to $\nn{f}_m$ as $n$ goes to $\infty$, while the third converges to $0$. Furthermore, the second integral is actually equal to $\nn{g_n}_m$ because $J_m(g_n)=0$ on $\Omega_1^{(n)}$. So there exists $N\in\N$ such that for every $n\geq N$,  \begin{equation*}
        \nn{f+g_n}_m\geq \nn{f}_m+\nn{g_n}_m-\eps. 
    \end{equation*} Thus, for every $n\geq N$, we have \begin{equation*}
        \nn{f+g_n}\geq \nn{f}+\nn{g_n}-4\eps=1+\nn{f}-4\eps. 
    \end{equation*} The conclusion follows.
\end{proof}

Thanks to the slice characterization of the DPr, Theorem~\ref{theorem:space-has-the-DPr} immediately follows from the above two lemmas.

\subsection{The dual satisfies the Daugavet property} Our goal in this subsection is to prove that in fact the space $X^*$ also has the DPr when $\mu$ is atomless. Again, notice that we do not need the extra assumption that the transformation $T: \Omega \rightarrow \Omega$ is ergodic.

\begin{theorem} \label{theorem:dual-has-the-DPr} Let $(\Omega, \Sigma, \mu)$ be an atomless probability space and $T: \Omega \rightarrow \Omega$ be an invertible measure-preserving transformation. Then the space $X^* = (L_{\infty}(\mu), \nn{\cdot}_{*})$ has the DPr.
\end{theorem}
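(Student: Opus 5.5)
The plan is to verify the Daugavet equation on $X^*$ through the slice criterion for the space $X^*$ itself. Fix $\Phi\in S_{X^*}$, a slice $S=S(B_{X^*},\xi,\alpha)$ with $\xi\in S_{X^{**}}$, and $\eps>0$. We look for $\Psi\in S$ with $\nn{\Phi+\Psi}_*>2-\eps$, that is, for some $g\in B_X$ with $\re(\Phi+\Psi)(g)>2-\eps$. Since $\xi$ need not be weak$^*$-continuous, we cannot simply perturb $\Psi$ on small sets. Instead we combine three ingredients: a point $\Psi_0\in S$ fixed in advance, the multiplier technique of Lemma~\ref{lemma1:dual-has-the-DPr}, and a pigeonhole argument controlling the values of $\xi$.

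\emph{Step 1 (localisation of $\Phi$).} Pick $h\in S_X$ with $\re\Phi(h)>1-\delta$. Build functions $0\le p_1,\dots,p_q\le 1$ with $\sum_i p_i=1$, exactly as in the proof of Lemma~\ref{lemma1:dual-has-the-DPr}, by averaging $\chi_{B_i}\circ T^j$ over long orbit windows. Then each $h p_i$ has small support, $\sum_i\nn{hp_i}<1+\eta$, and the key estimate $\sum_i|p_i(T^k\omega)-p_i(\omega)|\le 2|k|/N$ holds for $|k|\le m$. The same estimate gives, for every $g\in L_1(\mu)$ and every multiplier $p$ of this kind,
\begin{equation*}
\nn{pg}+\nn{(1-p)g}\le \nn{g}+\Big(\tfrac{2}{N}\sum_{|k|\le m}|k|\sqrt{a_k}+2\sum_{|k|>m}\sqrt{a_k}\Big)\|g\|_1 .
\end{equation*}
Consequently, for every $\Psi_0\in B_{X^*}$, the functional $\Psi_p:=p\Phi+(1-p)\Psi_0$ satisfies $\nn{\Psi_p}_*\le 1+O(\eta)$.

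\emph{Step 2 (choosing a good piece).} By the averaging argument of Lemma~\ref{lemma2:dual-has-the-DPr-first-step-induction}, most of the $\nn{\cdot}$-mass of $h$ lies on pieces $hp_i$ that are almost normed by $\Phi$. We need $r$ such pieces (with $r$ large), each almost normed by $\Phi$, and multipliers $\tilde p_1,\dots,\tilde p_r$ with pairwise disjoint supports such that $\tilde p_j\approx 1$ on $\supp(hp_{i_j})$. To get them, we would use two scales of averaging: short windows of length $N$ to produce the pieces, and long windows of length $M\gg N$ to produce smooth "plateau" multipliers $\tilde p_j$ which still satisfy the same orbit-variation bound.

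\emph{Step 3 (pigeonhole on $\xi$).} For any choice of unimodular scalars $\theta_j$, the element $\sum_j\theta_j\tilde p_j(\Phi-\Psi_0)$ has $\|\cdot\|_\infty\le 2$, so
\begin{equation*}
\sum_{j=1}^r|\xi(\tilde p_j(\Phi-\Psi_0))|\le 2\sum_{k\in\Z}\sqrt{a_k}.
\end{equation*}
Hence some $j$ satisfies $|\xi(\Psi_{\tilde p_j})-\xi(\Psi_0)|\le C/r$. After renormalising, $\Psi:=\Psi_{\tilde p_j}/\nn{\Psi_{\tilde p_j}}_*$ lies in $S$. Testing against $g=hp_{i_j}/\nn{hp_{i_j}}$, and using $\Psi\approx\Phi$ on $\supp g$ together with $\re\Phi(g)\approx1$, gives $\re(\Phi+\Psi)(g)\approx 2$.

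The main obstacle is Step 2. We must simultaneously keep many pieces with pairwise disjoint plateau multipliers, make those multipliers nearly $1$ on the chosen pieces, and retain the $2|k|/N$-type variation control that Step 1 needs for $\nn{\Psi}_*\le1+O(\eta)$. I would first try nested averages $\tilde p_j=\min\{1,\,L\cdot(\text{long average of }\chi_{\supp hp_{i_j}})\}$. This gives plateaus, but it may destroy disjointness. If so, the fallback is to select pieces whose $T$-orbit neighbourhoods of length $M$ are nearly disjoint; this is possible because these neighbourhoods have measure at most $M\rho$ and $\rho$ can be taken arbitrarily small.
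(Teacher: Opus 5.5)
Your skeleton is the paper's: fix $\Psi_0$ in the slice, glue $\Phi$ near a piece it almost norms to $\Psi_0$ away from it, and pigeonhole over many pieces via an $L_\infty$ bound and Lemma~\ref{lemma3:dual-has-the-DPr}. Your Step~1 is also sound: a cutoff $0\leq p\leq 1$ with orbit variation $|p(T^k\omega)-p(\omega)|\leq C|k|/M$ gives $\nn{p\Phi+(1-p)\Psi_0}_*\leq 1+O(\eta)$, which is a valid smooth substitute for the buffered sharp cutoff $\Phi\chi_A+\Psi_0\chi_{\Omega\setminus A^{[n]}}$ of Lemma~\ref{lemma4:dual-has-the-DPr}.

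The gap is Step~2, which you leave open, and neither of your suggestions closes it.
\begin{itemize}
\item As written, the nested average of $\chi_{\supp hp_{i_j}}$ need not give a plateau with small variation. If $\supp h$ is sparse along orbits, the $M$-window average at a point of the piece can be of order $1/M$. That forces $L\gtrsim M$ and an orbit variation of order $|k|$, which destroys the Step~1 estimate.
\item The fallback targets the wrong condition. The pigeonhole runs through $\bigl\|\sum_j\theta_j\tilde p_j\bigr\|_\infty$, i.e.\ through the pointwise overlap multiplicity. Overlaps of small \emph{measure} do not control this, since the dual norm is insensitive to measure: $\nn{\chi_E}_*\geq(\sum_k\sqrt{a_k})^{-1}$ whenever $\mu(E)>0$. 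So ``nearly disjoint'' neighbourhoods do not suffice. On the other hand, exact disjointness of the multipliers is more than needed.
\item The pieces themselves must have disjoint supports. The $hp_i$ of a single decomposition overlap, so such pieces do not come directly out of one application of Lemma~\ref{lemma1:dual-has-the-DPr}.
\end{itemize}

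The missing observation is that bounded pointwise overlap suffices, and it is automatic once the pieces are disjoint.
\begin{enumerate}
\item Obtain $f_1,\dots,f_r\in S_X$ with pairwise disjoint supports $A_1,\dots,A_r$ and $\re\Phi(f_j)$ close to $1$ by induction, as in Lemma~\ref{lemma2:dual-has-the-DPr}. At each stage, restrict $h$ to the complement of the supports already chosen (a set of small measure, so little $\nn{\cdot}$-mass is lost) and reapply Lemma~\ref{lemma1:dual-has-the-DPr}.
\item Set $\tilde p_j:=\max\{0,1-d_j/M\}$ with $d_j(\omega):=\min\{|k|:T^k\omega\in A_j\}$. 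It equals $1$ on $A_j$, satisfies $|\tilde p_j(T^k\omega)-\tilde p_j(\omega)|\leq |k|/M$, and is supported in $\bigcup_{|s|<M}T^sA_j$.
\item For each fixed $s$, the sets $T^sA_1,\dots,T^sA_r$ are pairwise disjoint, so $\sum_j\tilde p_j\leq 2M-1$ pointwise. Lemma~\ref{lemma3:dual-has-the-DPr} then gives $\sum_j|\xi(\tilde p_j(\Phi-\Psi_0))|\leq 2(2M-1)$.
\item Choose $m$, then $M$, then $r\gg M/\alpha$. Your Steps~1 and~3 then go through with $g=f_j$, since $\Psi_{\tilde p_j}(f_j)=\Phi(f_j)$.
\end{enumerate}
This is exactly the role of $\sum_i\chi_{A_i^{[n]}}\leq 4n+1$ and of the choice $(4n+2)/N<\alpha/4$ in the paper. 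There, the zero buffer on $A_i^{[n]}\setminus A_i$ makes smooth multipliers unnecessary.
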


First, we will need the following strengthening of Lemma~\ref{lemma2:dual-has-the-DPr-first-step-induction} from the previous section. 

\begin{lemma} \label{lemma2:dual-has-the-DPr} Let $\Phi \in S_{X^*}$, let $0 < \eta < 1$ and let $N \in \N$. There are $f_1, \ldots, f_N \in S_X$ whose supports are pairwise disjoint and such that 
\begin{equation*}
    \re \Phi(f_i) > 1 - \eta 
\end{equation*}
for every $i=1,\ldots, N$. 
\end{lemma}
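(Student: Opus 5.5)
Our plan is to argue by induction on $N$, using Lemma~\ref{lemma1:dual-has-the-DPr} repeatedly together with the absolute continuity $\nn{f\chi_E}\to 0$ as $\mu(E)\to 0$. The case $N=1$ is exactly Lemma~\ref{lemma2:dual-has-the-DPr-first-step-induction}. The idea for the inductive step is that once finitely many functions have been chosen, their union of supports is a set $E$ of small measure. Deleting $E$ from $\Omega$ then barely changes the pairing with $\Phi$ of a new function whose mass is spread out, so the next function can be found with support in $\Omega\setminus E$.

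Concretely, we would fix $h\in S_X$ with $\re\Phi(h)>1-\eta/4$. We then apply Lemma~\ref{lemma1:dual-has-the-DPr} with a small parameter $\eta'$ and a small $\rho$ to get $h=\sum_{i=1}^q h_i$ with $\mu(\supp h_i)<\rho$ and $\sum_i\nn{h_i}<1+\eta'$. Call an index $i$ \emph{good} if $\re\Phi(h_i)>(1-\eta)\nn{h_i}$. We have
\begin{equation*}
\sum_{i\text{ bad}}\nn{h_i}\,\eta\le \sum_i\nn{h_i}-\re\Phi(h)<\eta'+\eta/4,
\end{equation*}
so the bad pieces carry total norm at most $(\eta'+\eta/4)/\eta<1/2$ for small $\eta'$. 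The good pieces therefore carry total norm at least $1/2$, with average ratio $\re\Phi(h_i)/\nn{h_i}$ close to $1$. The good pieces are not yet disjoint, however, so we disjointify. We build $f_1,\dots,f_N$ step by step. Having $f_1,\dots,f_{j}$ with union of supports $E_j$ and $\mu(E_j)<j\rho$, we look at $h\chi_{\Omega\setminus E_j}$. Since $\rho$ can be chosen so that $\nn{h\chi_{E_j}}$ is tiny uniformly (absolute continuity, bounded by $(\sum\sqrt{a_k})\int_{E_j}|h|$), we get $\re\Phi(h\chi_{\Omega\setminus E_j})>1-\eta/2$ and $\nn{h\chi_{\Omega\setminus E_j}}\le 1$ by Fact~\ref{erg-monotone}.

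This suggests a cleaner route that avoids bookkeeping. Set $g:=h\chi_{\Omega\setminus E_j}/\nn{h\chi_{\Omega\setminus E_j}}\in S_X$, which satisfies $\re\Phi(g)>1-\eta/2$. Applying Lemma~\ref{lemma1:dual-has-the-DPr} to $g$ yields pieces $g p_i$ supported in $\supp g\subset\Omega\setminus E_j$, with small supports. The averaging argument from Lemma~\ref{lemma2:dual-has-the-DPr-first-step-induction} then gives one piece whose normalisation $f_{j+1}$ satisfies $\re\Phi(f_{j+1})>1-\eta$ and $\mu(\supp f_{j+1})<\rho$. This $f_{j+1}$ is disjoint from $f_1,\dots,f_j$ because $h_i=gp_i$ vanishes where $g$ does. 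Since all supports stay below $\rho$, after $N$ steps the exceptional set has measure less than $N\rho$.

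The main obstacle is choosing $\rho$ in advance, uniformly over the $N$ steps. This requires that $\int_E|h|$ be small whenever $\mu(E)<N\rho$, which holds by absolute continuity of $|h|\in L_1$ for the single fixed $h$. It also requires $\nn{\Phi}_*\le1$, so that $|\Phi(h\chi_E)|\le\nn{h\chi_E}$. So we first fix $h$, then pick $\delta$ with $(\sum\sqrt{a_k})\int_E|h|<\eta/4$ whenever $\mu(E)<\delta$, and then set $\rho=\delta/N$. After that, every step above goes through with the same margins.
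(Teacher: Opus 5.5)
Your proposal is correct and is essentially the paper's proof. You fix $h$ with $\re\Phi(h)>1-\eta/4$, choose $\rho$ uniformly in advance by absolute continuity, and at each step apply Lemma~\ref{lemma1:dual-has-the-DPr} to the normalised restriction of $h$ to the complement of the previously used supports, using the proof of that lemma to keep the pieces inside this complement and the averaging argument to select one piece; the exploratory good/bad-pieces paragraph is not needed.
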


\begin{proof} Choose $h \in S_X$ with $\re \Phi(h) > 1 - \frac{\eta}{4}$. Choose $\rho > 0$ such that $\mu(E) < \rho$ implies $\nn{h \chi_E} < \frac{\eta}{4}$ for $E \in \Sigma$. Suppose that $f_1, \ldots, f_{i-1}$ have been chosen for some $i\geq 2$, their supports $A_1, \ldots, A_{i-1}$ are pairwise disjoint and $\mu(A_j) < \rho/N$ (for $i=2$, this can be done thanks to Lemma~\ref{lemma2:dual-has-the-DPr-first-step-induction}). Put $E:= A_1 \cup \ldots \cup A_{i-1}$. Since $\mu(E) < \rho$, we have that 
\begin{equation*}
    u:= \frac{h \chi_{\Omega \setminus E}}{\nn{h \chi_{\Omega \setminus E}}} \in S_X \ \ \mbox{satisfies} \ \ \     \re \Phi(u) > 1 - \frac{\eta}{2}.
\end{equation*}
 Apply Lemma \ref{lemma1:dual-has-the-DPr} to $u$ and we obtain $u_1, \ldots, u_q \in X$ such that $u = \sum_{j=1}^q u_j$, where 
 \begin{equation*}
     \mu(\supp u_j) < \frac{\rho}{N} \ \ \ \mbox{and} \ \ \ \sum_{j=1}^q \nn{u_j} < 1 + \frac{\eta}{4}.
 \end{equation*}
Also, by the proof of Lemma \ref{lemma1:dual-has-the-DPr}, every $u_j$ is supported in $\supp u$. We want to show that there exists $j_0 \in \{1, \ldots, q\}$ such that $\re \Phi(\frac{u_{j_0}}{\nn{u_{j_0}}}) > 1 - \eta$, that is, $\re \Phi (u_{j_0}) > (1 - \eta)\nn{u_{j_0}}$ with $u_{j_0}\not=0$. Suppose that this is not the case. Then, $\re \Phi(u_j) \leq (1 - \eta) \nn{u_j}$ for every $j=1,\ldots, q$. So, 
\begin{equation*}
    \re \Phi(u) = \sum_{j=1}^q \re \Phi(u_j) \leq (1 - \eta) \sum_{j=1}^q \nn{u_j} < (1 - \eta)\left( 1 + \frac{\eta}{4} \right) < 1 - \frac{\eta}{2}
\end{equation*}
which yields a contradiction. So, we put $f_i := \frac{u_{j_0}}{\nn{u_{j_0}}}$ and induction gives us the desired result.
\end{proof}

We also need two estimates in $L_{\infty}(\mu)$. The first one is almost trivial, but we include the proof for the sake of completeness. 

\begin{lemma} \label{lemma3:dual-has-the-DPr} For $\Phi, \Psi \in L_{\infty}(\mu)$, we have that $\nn{\Phi \Psi}_{*} \leq \nn{\Phi}_{*} \|\Psi\|_{\infty}$.
\end{lemma}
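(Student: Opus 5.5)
We argue by duality. Write
\begin{equation*}
    \nn{\Phi\Psi}_{*}=\sup_{\nn{f}\leq 1}\abs{\int_\Omega \Phi\Psi f\,d\mu}=\sup_{\nn{f}\leq 1}\abs{\Phi(\Psi f)}.
\end{equation*}
For $f\in L_1(\mu)$ with $\nn{f}\leq 1$, the function $\Psi f$ still lies in $L_1(\mu)$, and $\abs{\Phi(\Psi f)}\leq \nn{\Phi}_{*}\,\nn{\Psi f}$. It therefore suffices to prove the multiplier estimate
\begin{equation*}
    \nn{\Psi f}\leq \|\Psi\|_{\infty}\nn{f}
\end{equation*}
for every $f\in L_1(\mu)$. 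Taking the supremum over $f$ then yields the claim.

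For the multiplier estimate we use Fact~\ref{erg-monotone}. We have $\abs{\Psi f}\leq \|\Psi\|_{\infty}\abs{f}$ almost everywhere, so monotonicity gives $\nn{\Psi f}\leq \nn{\|\Psi\|_{\infty}f}=\|\Psi\|_{\infty}\nn{f}$ by homogeneity of the norm. The only subtle point is that the pointwise inequality is needed simultaneously at all points $T^k\omega$, $k\in\Z$, for almost every $\omega$. Fact~\ref{erg-monotone} already handles this, since $T$ is measure preserving and the exceptional null set can be saturated under the $T^k$.

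There is no real obstacle. The only care needed is the almost-everywhere bookkeeping just described, and it is already contained in Fact~\ref{erg-monotone}.
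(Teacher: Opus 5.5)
Your proposal is correct and follows essentially the same route as the paper: pointwise domination $\abs{\Psi f}\leq \|\Psi\|_{\infty}\abs{f}$, Fact~\ref{erg-monotone} to get $\nn{\Psi f}\leq\|\Psi\|_{\infty}\nn{f}$, then $\abs{(\Phi\Psi)(f)}\leq\nn{\Phi}_{*}\nn{\Psi f}$ and a supremum over $\nn{f}\leq 1$. Your remark that the almost-everywhere bookkeeping along the orbits $T^k\omega$ is already handled inside Fact~\ref{erg-monotone} is accurate.
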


\begin{proof} Pointwise, we have that $|\Psi(\omega) f(\omega)| \leq \|\Psi\|_{\infty} |f(\omega)|$ for every $\omega \in \Omega$. By Fact \ref{erg-monotone}, $\nn{ \Psi f} \leq \nn{ \|\Psi\|_{\infty} f} = \|\Psi\|_{\infty} \nn{f}$, for every $f \in X$. On the other hand, by the definition of the dual norm, $|\Phi(g)| \leq \nn{\Phi}_{*} \nn{g}$ for every $g \in X$. So,
\begin{equation*}
    |(\Phi \Psi)(f)| \leq \nn{\Phi}_{*} \nn{\Psi f} \leq \nn{\Phi}_{*} \|\Psi\|_{\infty} \nn{f}
\end{equation*}
for every $f \in L_1(\mu)$. Taking the supremum over all $f \in L_1(\mu)$ with $\nn{f} \leq 1$ gives the result.
\end{proof}

We need one more notation. For $A \in \Sigma$ and $n \in \N$, let us write 
\begin{equation*}
    A^{[n]} := \bigcup_{|r| \leq 2n} T^r A.
\end{equation*}

We have the following inequality.

\begin{lemma} \label{lemma4:dual-has-the-DPr} For $A \in \Sigma$, $n \in \N$ and $\Phi, \Psi \in X^*$, we have that 
\begin{equation*}
    \nn{\Phi \chi_A + \Psi \chi_{\Omega \setminus A^{[n]}}}_{*} \leq \left(1 + \sum_{|k| > n} \sqrt{a_k} \right) \max \{ \nn{\Phi}_{*}, \nn{\Psi}_*\}.
\end{equation*}
\end{lemma}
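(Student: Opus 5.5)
The plan is to bound the dual norm by testing against an arbitrary $f\in L_1(\mu)$ with $\nn{f}\leq 1$. Write $B:=\Omega\setminus A^{[n]}$ and $M:=\max\{\nn{\Phi}_*,\nn{\Psi}_*\}$. Since $A\subset A^{[n]}$ (take $r=0$), the sets $A$ and $B$ are disjoint. Hence
\begin{equation*}
\bigl|(\Phi\chi_A+\Psi\chi_B)(f)\bigr| = \bigl|\Phi(f\chi_A)+\Psi(f\chi_B)\bigr| \leq M\bigl(\nn{f\chi_A}+\nn{f\chi_B}\bigr).
\end{equation*}
It therefore suffices to show that $\nn{f\chi_A}+\nn{f\chi_B}\leq \bigl(1+\sum_{|k|>n}\sqrt{a_k}\bigr)\nn{f}$.

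To get this, I would pass to the truncated norms $\nn{\cdot}_n$ and use \eqref{inequality-1} for each piece:
\begin{equation*}
\nn{f\chi_A}+\nn{f\chi_B}\leq \nn{f\chi_A}_n+\nn{f\chi_B}_n+\Bigl(\sum_{|k|>n}\sqrt{a_k}\Bigr)\bigl(\|f\chi_A\|_1+\|f\chi_B\|_1\bigr).
\end{equation*}
The tail term is at most $\bigl(\sum_{|k|>n}\sqrt{a_k}\bigr)\|f\|_1\leq \bigl(\sum_{|k|>n}\sqrt{a_k}\bigr)\nn{f}$, using disjointness of $A$ and $B$ and \eqref{equivalence}.

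The key step is a pointwise separation property: for every $\omega$, at most one of the vectors $J_n(f\chi_A)(\omega)$ and $J_n(f\chi_B)(\omega)$ is nonzero. Suppose both were nonzero. Then there would be $|k|,|l|\leq n$ with $T^k\omega\in A$ and $T^l\omega\in B$. But $T^l\omega=T^{l-k}(T^k\omega)$ with $|l-k|\leq 2n$, so $T^l\omega\in A^{[n]}$. This contradicts $T^l\omega\in B$. The definition of $A^{[n]}$ uses the symmetric range $|r|\leq 2n$, so it does not matter whether one writes $T^rA$ or $T^{-r}A$.

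Consequently, for every $\omega$,
\begin{equation*}
\|J_n(f\chi_A)(\omega)\|_2+\|J_n(f\chi_B)(\omega)\|_2=\|J_n(f\chi_{A\cup B})(\omega)\|_2\leq \|J_n f(\omega)\|_2 .
\end{equation*}
The equality holds because one of the two summands vanishes, and $J_n(f\chi_{A\cup B})(\omega)$ then equals the other one. The inequality is coordinatewise monotonicity. Integrating gives $\nn{f\chi_A}_n+\nn{f\chi_B}_n\leq\nn{f}_n\leq\nn{f}$. Combining this with the tail estimate yields the claim, and taking the supremum over $f$ proves the lemma.

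The only real obstacle is identifying this separation step. A naive pointwise bound such as $\|x\|_2+\|y\|_2\leq\sqrt2\,\|x+y\|_2$ for disjointly supported vectors loses a constant factor. Truncating to the window $|k|\leq n$ is exactly what makes the two pieces never interact at a single point $\omega$, and the price of truncating is the tail $\sum_{|k|>n}\sqrt{a_k}$.
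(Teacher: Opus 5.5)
Your proof is correct and follows essentially the same route as the paper. Both arguments use the pointwise separation of $J_n(f\chi_A)(\omega)$ and $J_n(f\chi_{\Omega\setminus A^{[n]}})(\omega)$ via $|l-k|\leq 2n$, then integrate to get $\nn{f\chi_A}_n+\nn{f\chi_{\Omega\setminus A^{[n]}}}_n\leq\nn{f}$, pass to the full norm using \eqref{inequality-1} together with disjointness and $\|f\|_1\leq\nn{f}$, and finish with the duality estimate.
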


\begin{proof} Fix $f \in X$. For a fixed $\omega \in \Omega$, the vector $J_n( f \chi_A )(\omega)$ is nonzero if there exists $|k| \leq n$ such that $f( T^k \omega ) \chi_A( T^k \omega ) \not= 0$. In particular, $T^k \omega \in A$. Similarly, the vector $J_n( f \chi_{\Omega \setminus A^{[n]}} )(\omega)$ is nonzero if there exists $|l| \leq n$ such that $T^l \omega \in \Omega \setminus A^{[n]}$, that is, $T^l \omega \notin A^{[n]}$. For a fixed $\omega$, the vectors $J_n( f \chi_A )(\omega)$ and $J_n( f \chi_{\Omega \setminus A^{[n]}} )(\omega)$ cannot be both nonzero. Indeed, in this case, $T^k \omega \in A$ and $T^l \omega \notin A^{[n]}$ for some $|k|, |l| \leq n$. However, $T^l \omega = T^{l-k}( T^k \omega)$ and $|l-k| \leq 2n$. Since $T^k \omega \in A$, it follows that $T^{l-k}( T^k \omega ) \in T^{l-k} A$. Also,
\begin{equation*}
    T^{l-k} A \subseteq \bigcup_{|r| \leq 2n} T^r A = A^{[n]}.
\end{equation*}
This would imply that $T^l \omega \in A^{[n]}$, a contradiction. Now, notice that each vector above is obtained from $J_n f(\omega)$ by replacing some coordinates with zero, so $\| J_n( f \chi_A )(\omega) \|_2\leq \| J_n f(\omega) \|_2$ and $\| J_n( f \chi_{\Omega \setminus A^{[n]}} )(\omega) \|_2\leq \| J_n f(\omega) \|_2$, and by the above observation it follows that also
\begin{equation*}
    \| J_n( f \chi_A )(\omega) \|_2 + \| J_n( f \chi_{\Omega \setminus A^{[n]}} )(\omega) \|_2
    \leq \| J_n f(\omega) \|_2.
\end{equation*}
This implies that
\begin{equation*}
    \nn{ f \chi_A }_n + \nn{f \chi_{\Omega \setminus A^{[n]}}}_n \leq \nn{f}.
\end{equation*}
So, using (\ref{inequality-1}), we get 
\begin{eqnarray*}
    \nn{f \chi_A} + \nn{f \chi_{\Omega \setminus A^{[n]}}} &\leq& \nn{f} + \left(        \sum_{|k| > n} \sqrt{a_k}\right) \left( \| f \chi_A \|_1 + \| f \chi_{\Omega \setminus A^{[n]}} \|_1 \right).
\end{eqnarray*}
Since $A \subseteq A^{[n]}$, the sets $A$ and $\Omega \setminus A^{[n]}$ are disjoint. Hence,
\begin{equation*}
    \| f \chi_A \|_1  + \| f \chi_{\Omega \setminus A^{[n]}} \|_1 \leq \|f\|_1 \leq \nn{f}.
\end{equation*}
Therefore,
\begin{equation*}
    \nn{f \chi_A} + \nn{f \chi_{\Omega \setminus A^{[n]}}} \leq \left( 1 + \sum_{|k| > n} \sqrt{a_k} \right) \nn{f}.
\end{equation*}

Now, since
\begin{equation*}
    \left(\Phi \chi_A + \Psi \chi_{\Omega \setminus A^{[n]}}\right)(f) = \Phi( f \chi_A )
    + \Psi( f \chi_{\Omega \setminus A^{[n]}} ),
\end{equation*}
we have that
\begin{eqnarray*}
    \left| \left(\Phi \chi_A +\Psi \chi_{\Omega \setminus A^{[n]}}  \right)(f) \right|
    &\leq& \nn{\Phi}_{*} \nn{f \chi_A} + \nn{\Psi}_{*} \nn{f \chi_{\Omega \setminus A^{[n]}}}
    \\
    &\leq&  \max \{ \nn{\Phi}_{*}, \nn{\Psi}_{*} \} \left( \nn{f \chi_A}  +  \nn{f \chi_{\Omega \setminus A^{[n]}}} \right) \\
    &\leq& \left( 1 + \sum_{|k| > n} \sqrt{a_k} \right) \max \{ \nn{\Phi}_{*}, \nn{\Psi}_{*} \} \nn{f}.
\end{eqnarray*}
Taking the supremum over all $f \in X$ with $\nn{f} \leq 1$ gives the result.
\end{proof}

Now we are ready to prove Theorem \ref{theorem:dual-has-the-DPr}.

\begin{proof}[Proof of Theorem \ref{theorem:dual-has-the-DPr}] Fix $\Phi \in S_{X^*}$, $\eps \in (0,1)$ and a slice $S := S( B_{X^*}, F, \alpha )$ with $F \in S_{X^{**}}$ and $0 < \alpha < 1$. Choose $\Psi_0 \in S_{X^*}$ and then $n, N \in \N$ so that 
\begin{equation} \label{eq:dual-DPr-choice}
    \re F(\Psi_0) > 1 - \frac{\alpha}{4}, \ \ \ \sum_{|k| > n} \sqrt{a_k} < \min \left\{
        \frac{\alpha}{4},\frac{\eps}{4} \right\} \ \ \ \mbox{and} \ \ \ \frac{4n+2}{N} < \frac{\alpha}{4}.
\end{equation}
By Lemma \ref{lemma2:dual-has-the-DPr}, there are $f_1, \ldots, f_N \in S_X$ with pairwise disjoint supports $A_i := \supp f_i$ such that 
\begin{equation} \label{eq:dual-DPr-fi}
    \re \Phi(f_i) > 1 - \frac{\eps}{4}
    \end{equation}
for every $i = 1, \ldots, N$. Now, let us set
\begin{equation*}
    A_i^{[n]} := \bigcup_{|r| \leq 2n} T^r A_i \ \ \ \mbox{and} \ \ \ \Psi_i
    := \Phi \chi_{A_i} - \Psi_0 \chi_{A_i^{[n]}}
\end{equation*}
for every $i = 1, \ldots, N$. For a fixed $r$, the sets $T^r A_1, \ldots, T^r A_N$ are pairwise disjoint. Then,
\begin{equation*}
    \sum_{i=1}^N \chi_{A_i^{[n]}} \leq \sum_{|r| \leq 2n} \sum_{i=1}^N \chi_{T^r A_i} \leq
    4n+1.
\end{equation*}
Pick scalars $\lambda_i$ of modulus one so that $\lambda_i F(\Psi_i) = |F(\Psi_i)|$ for every $i = 1, \ldots, N$. If $F(\Psi_i) = 0$, take $\lambda_i = 1$. Since the sets $A_1, \ldots, A_N$ are pairwise disjoint,
\begin{equation*}
    \left\| \sum_{i=1}^N \lambda_i \chi_{A_i} \right\|_{\infty} \leq  1
\end{equation*}
and the preceding estimate gives
\begin{equation*}
    \left\| \sum_{i=1}^N \lambda_i \chi_{A_i^{[n]}} \right\|_{\infty} \leq 4n+1.
\end{equation*}
Lemma \ref{lemma3:dual-has-the-DPr} now gives
\begin{eqnarray*}
    \sum_{i=1}^N |F(\Psi_i)|  &=& \left|  F\left(\sum_{i=1}^N \lambda_i \Psi_i \right)
    \right|  \\
    &\leq& \nn{ \sum_{i=1}^N \lambda_i \Psi_i }_{*}  \\
    &\leq& \nn{ \Phi \left( \sum_{i=1}^N \lambda_i \chi_{A_i} \right)}_{*}  + \nn{
        \Psi_0\left( \sum_{i=1}^N \lambda_i \chi_{A_i^{[n]}} \right)}_{*}  
    \leq  1 + (4n+1)  =  4n+2.
\end{eqnarray*}
Since $\frac{4n+2}{N} < \frac{\alpha}{4}$ by (\ref{eq:dual-DPr-choice}), there exists
$i \in \{1, \ldots, N\}$ such that 
\begin{equation} \label{eq:dual-DPr-F-Psi-i}
    |F(\Psi_i)| < \frac{\alpha}{4}.
\end{equation}
For this $i$, define
\begin{equation*}
    \widetilde{\Psi} :=   \Psi_0 + \Psi_i.
    \end{equation*}
Since $A_i \subseteq A_i^{[n]}$, we have
\begin{equation*}
    \widetilde{\Psi} = \Psi_0  +\Phi \chi_{A_i} - \Psi_0 \chi_{A_i^{[n]}}  = \Phi \chi_{A_i}
    + \Psi_0 \chi_{\Omega \setminus A_i^{[n]}}.
\end{equation*}
Lemma \ref{lemma4:dual-has-the-DPr} now gives
\begin{equation*}
    \nn{\widetilde{\Psi}}_{*} \leq 1 + \sum_{|k| > n} \sqrt{a_k}.
\end{equation*}
Now, define
\begin{equation*}
    \Psi := \frac{ \widetilde{\Psi}}{ 1 + \sum_{|k| > n} \sqrt{a_k}}.
\end{equation*}
It follows that $\Psi \in B_{X^*}$. Also,
\begin{equation*}
    \re F(\widetilde{\Psi}) = \re F(\Psi_0)  + \re F(\Psi_i) \geq \re F(\Psi_0) - |F(\Psi_i)| > 1 - \frac{\alpha}{4} - \frac{\alpha}{4} = 1 - \frac{\alpha}{2},
\end{equation*}
where we have used (\ref{eq:dual-DPr-choice}) and (\ref{eq:dual-DPr-F-Psi-i}). Consequently,
\begin{equation*}
    \re F(\Psi) > \frac{ 1 - \frac{\alpha}{2}}{1 + \sum_{|k| > n} \sqrt{a_k} }
    > \frac{ 1 - \frac{\alpha}{2} }{  1 + \frac{\alpha}{4}  } > 1 - \alpha,
\end{equation*}
where the second inequality follows from (\ref{eq:dual-DPr-choice}). Thus, $\Psi \in S$. Since $f_i$ is supported on $A_i$, we have $\widetilde{\Psi}(f_i) = \Phi(f_i)$, since $\Psi_0 \chi_{\Omega \setminus A_i^{[n]}}$ vanishes on $A_i$. Therefore,
\begin{eqnarray*}
    \nn{\Phi + \Psi}_{*}  &\geq& \re(\Phi + \Psi)(f_i) \\
    &=& \re \Phi(f_i) + \frac{ \re \widetilde{\Psi}(f_i) }{ 1 + \sum_{|k| > n} \sqrt{a_k}
    } \\
    &=& \left(  1 + \frac{ 1 }{1 + \sum_{|k| > n} \sqrt{a_k} } \right)
    \re \Phi(f_i) > \left(2 - \frac{\eps}{4}\right) \left(  1 - \frac{\eps}{4} \right)  > 2 - \eps,
\end{eqnarray*}
where the last two inequalities follow from (\ref{eq:dual-DPr-choice}) and (\ref{eq:dual-DPr-fi}). The slice characterization of the DPr gives the desired result.
\end{proof}

\subsection{Further remarks}

There is yet another way to see that $(L_1(\mu), \nn{\cdot})$ satisfies the DPr. Indeed, it turns out that this space actually satisfies the stronger operator Daugavet property (introduced in \cite{RTV21}). We say that $X$ has the {\bf operator Daugavet property} (ODPr, for short) if, for every $x_1,\dots,x_n\in S_X$, every slice $S$ of $B_X$, and every $\eps>0$, there exists $y\in S$ such that, for every $z\in B_X$, there exists an operator $T\in\mathcal{L}(X)$ satisfying
\begin{equation*}
    \|T\|\leq 1+\eps, \ \ \ \|T(x_i)-x_i\|<\eps \ \forall \ 1\leq i\leq n \ \ \ \mbox{and} \ \ \  T(y)=z.
\end{equation*}
It is known that the ODPr implies the DPr, but the converse is false in general, as witnesses the recent example from \cite{Ham26-2}.

\begin{theorem}\label{theorem:X_has_ODP} Let $(\Omega, \Sigma, \mu)$ be an atomless probability space. Let $T: \Omega \rightarrow \Omega$ be an invertible measure-preserving transformation. Then the space $X = (L_1(\mu), \nn{\cdot})$ has the ODPr. 
\end{theorem}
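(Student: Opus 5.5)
Given $x_1,\dots,x_n\in S_X$, a slice $S=S(B_X,\Phi,\alpha)$ and $\eps>0$, the plan is to take $y\in S$ with very small support, and then define operators that act as a small perturbation of the identity away from a neighbourhood of $\supp y$ and send $y$ to $z$. By density and uniform integrability, we may first assume that each $x_i$ is approximated well enough that $\nn{x_i\chi_E}<\delta$ whenever $\mu(E)<\rho$, for a small $\rho$ fixed in advance. Lemma~\ref{lemma2:dual-has-the-DPr-first-step-induction} then gives $y\in S\cap S_X$ with $A:=\supp y$ and $\mu(A)$ as small as we want. Fix $m$ with $\sum_{|k|>m}\sqrt{a_k}<\eps/4$, and set $A^{[m]}=\bigcup_{|r|\le 2m}T^rA$. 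Shrinking $\mu(A)$ makes $\mu(A^{[m]})\le(4m+1)\mu(A)<\rho$, so each $x_i$ has $\nn{x_i\chi_{A^{[m]}}}<\delta$.

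Given $z\in B_X$, we choose $\Phi_y\in S_{X^*}$ norming $y$ and define
\begin{equation*}
R(f):=f\chi_{\Omega\setminus A^{[m]}}+\Phi_y(f\chi_A)\,z .
\end{equation*}
Then $R(y)=y\chi_{\Omega\setminus A^{[m]}}+\Phi_y(y)z=z$, because $y$ is supported in $A\subseteq A^{[m]}$. We also have $\nn{R(x_i)-x_i}\le\nn{x_i\chi_{A^{[m]}}}+\nn{x_i\chi_A}<2\delta$, which handles the approximation requirement.

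The norm estimate follows the same computation as in the proof of Lemma~\ref{lemma4:dual-has-the-DPr}. For each $\omega$, at most one of $J_m(f\chi_A)(\omega)$ and $J_m(f\chi_{\Omega\setminus A^{[m]}})(\omega)$ is nonzero. This yields
\begin{equation*}
\nn{f\chi_A}+\nn{f\chi_{\Omega\setminus A^{[m]}}}\le\Bigl(1+\sum_{|k|>m}\sqrt{a_k}\Bigr)\nn{f}.
\end{equation*}
Hence
\begin{equation*}
\nn{R(f)}\le\nn{f\chi_{\Omega\setminus A^{[m]}}}+|\Phi_y(f\chi_A)|\,\nn{z}\le\nn{f\chi_{\Omega\setminus A^{[m]}}}+\nn{f\chi_A}\le(1+\eps)\nn{f}.
\end{equation*}
This gives $\nn{R}\le1+\eps$.

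The delicate point is that $y$ is chosen before $z$. This causes no problem here, because $R$ depends on $z$ only through the rank-one part. The quantities $\rho$, $m$ and $A$ are all fixed independently of $z$, using only $x_1,\dots,x_n$, $\Phi$, $\alpha$ and $\eps$. The remaining care is in ordering the parameters. First fix $m$ from $\eps$. Then fix $\delta<\eps/2$, and from it $\rho$ via the absolute continuity $\nn{x_i\chi_E}\to0$ as $\mu(E)\to0$ noted earlier. Finally, apply Lemma~\ref{lemma2:dual-has-the-DPr-first-step-induction} with support bound $\rho/(4m+1)$.
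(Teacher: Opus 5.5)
Your proof is correct and is essentially the paper's argument: you pick $y$ in the slice with tiny support $A$ via Lemma~\ref{lemma2:dual-has-the-DPr-first-step-induction}, define $R(f)=f\chi_{\Omega\setminus A^{[m]}}+\Phi_y(f\chi_A)z$ with $\Phi_y$ norming $y$, and bound $\nn{R}$ by the disjointness estimate from the proof of Lemma~\ref{lemma4:dual-has-the-DPr}. The opening appeal to density and uniform integrability is unnecessary: the absolute continuity $\nn{x_i\chi_E}\to 0$ as $\mu(E)\to 0$, which you use at the end anyway, already gives the required smallness.
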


\begin{proof} Indeed, let us fix $x_1, \ldots, x_m \in S_X$, a slice $S = S( B_X, \Phi, \alpha )$ with $\Phi \in S_{X^*}$, $\eps \in (0,1)$ and $0 < \alpha < 1$. Choose $n \in \N$ such that
\begin{equation*}
    \sum_{|k| > n} \sqrt{a_k} < \eps.
\end{equation*}
Since $T$ is measure preserving,
\begin{equation*}
    \mu(A^{[n]}) \leq \sum_{|r| \leq 2n} \mu(T^r A) = (4n+1) \mu(A).
\end{equation*}
This means that $\mu(A^{[n]})$ can be made arbitrarily small by making $\mu(A)$ sufficiently small, while $\nn{x_i \chi_E} \rightarrow 0$ as $\mu(E) \rightarrow 0$. Therefore, by Lemma \ref{lemma2:dual-has-the-DPr-first-step-induction}, we may choose $y \in S \cap S_X$ with support $A := \supp y$ so small that $2 \nn{x_i \chi_{A^{[n]}}} < \eps$ for every $i = 1, \ldots, m$. Choose $\Psi \in S_{X^*}$ such that $\Psi(y) = 1$. Now, let
$z \in B_X$ be arbitrary. Define $R_z: X \rightarrow X$ by
\begin{equation*}
    R_z(f) := f \chi_{\Omega \setminus A^{[n]}} + \Psi( f \chi_A ) z
\end{equation*}
for every $f \in X$. By the estimate obtained in the proof of Lemma \ref{lemma4:dual-has-the-DPr}, we have that
\begin{eqnarray*}
    \nn{R_z(f)} &\leq& \nn{f \chi_{\Omega \setminus A^{[n]}}} + |\Psi( f \chi_A )| \nn{z}
    \\
    &\leq& \nn{f \chi_{\Omega \setminus A^{[n]}}} + \nn{f \chi_A} \\
    &\leq& \left( 1 + \sum_{|k| > n} \sqrt{a_k}\right) \nn{f}  < (1+\eps) \nn{f}
\end{eqnarray*}
for every $f \in X$. This implies that $\|R_z\| \leq 1+\eps$. Now, since $y$ is supported on $A$ and $A \subseteq A^{[n]}$, we have that $y \chi_{\Omega \setminus A^{[n]}} = 0$ and $y \chi_A =  y$. So, $R_z(y) = \Psi(y) z = z$. Finally, for $1 \leq i \leq m$,
\begin{equation*}
    R_z(x_i) - x_i = x_i \chi_{\Omega \setminus A^{[n]}} + \Psi( x_i \chi_A ) z - x_i  = - x_i \chi_{A^{[n]}} + \Psi( x_i \chi_A ) z.
\end{equation*}
Hence, using $A \subseteq A^{[n]}$, monotonicity of $\nn{\cdot}$, $\nn{\Psi}_{*} = 1$ and $\nn{z} \leq 1$, we get
\begin{eqnarray*}
    \nn{R_z(x_i) - x_i} &\leq& \nn{x_i \chi_{A^{[n]}}} + |\Psi( x_i \chi_A )| \nn{z} \\
    &\leq& \nn{x_i \chi_{A^{[n]}}} + \nn{x_i \chi_A} \\
    &\leq& 2 \nn{x_i \chi_{A^{[n]}}} <  \eps.
\end{eqnarray*}
\end{proof}

As far as we know, it is still an open problem to find a bidual Banach space with the Daugavet property. As we can see below, our space does not yield such an example.

\begin{fact}
The bidual of $X := (L_1(\mu), \nn{\cdot})$ fails the DPr.
\end{fact}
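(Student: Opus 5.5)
The plan is to exhibit a nonzero rank-one operator $R$ on $X^{**}$ with $\|I+R\|\leq 1<1+\|R\|$, by exploiting the fact that $X^{**}$, as a vector space, is the space of regular Borel measures on a compact space that has atoms. Identify $X^*=(L_\infty(\mu),\nn{\cdot}_*)$ isomorphically with $C(K)$, where $K$ is the Stone space of the measure algebra of $\mu$ (Gelfand representation of the $C^*$-algebra $L_\infty(\mu)$). Then $X^{**}$ is, as a vector space, $C(K)^*=M(K)$, equipped with the dual norm of $\nn{\cdot}_*$. The latter is equivalent to the total variation norm, since $\nn{\cdot}_*$ is equivalent to $\|\cdot\|_\infty$. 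Fix any point $p\in K$ and let $\delta_p\in X^{**}$ denote evaluation at $p$, which is a nonzero element.

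Define $x^{***}:X^{**}\to\K$ by $x^{***}(F):=F(\{p\})$, the mass of the measure $F$ at the atom $p$. We have $|F(\{p\})|\leq\|F\|_{M(K)}$, and $\|F\|_{M(K)}$ is dominated by a constant times the $X^{**}$-norm by the norm equivalences recorded after \eqref{equivalence}. Hence $x^{***}$ is bounded, and it is nonzero because $x^{***}(\delta_p)=1$. Set $R:=-x^{***}\otimes\delta_p$, so that $R(F)=-F(\{p\})\delta_p$. This is a nonzero rank-one operator, and
\begin{equation*}
(I+R)F=F-F(\{p\})\delta_p=F|_{K\setminus\{p\}},
\end{equation*}
the restriction of the measure $F$ to $K\setminus\{p\}$.

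It remains to show that $\|F|_{K\setminus\{p\}}\|\leq\|F\|$ in the $X^{**}$-norm, since then $\|I+R\|\leq 1<1+\|R\|$ and the DPr fails. This is the step I expect to need the most care. The idea is that $\nn{\cdot}$ is a lattice norm on $L_1(\mu)$ by Fact~\ref{erg-monotone}. Its dual norm $\nn{\cdot}_*$ is then a lattice norm on $L_\infty(\mu)\cong C(K)$, which is exactly what Lemma~\ref{lemma3:dual-has-the-DPr} expresses: $\nn{\Phi\Psi}_*\leq\nn{\Phi}_*\|\Psi\|_\infty$. Dualizing once more, the norm on $M(K)$ is a lattice norm too. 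Concretely, for a Borel set $B\subset K$ and $F\in M(K)$, I want $\|F|_B\|\leq\|F\|$.

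To prove this, take $\Phi\in B_{X^*}$. By regularity of $F$ and Lusin-type approximation, I approximate $\chi_B$ in $L_1(|F|)$ by continuous functions $\Psi$ on $K$ with $\|\Psi\|_\infty\leq1$; on the Stone space one can even take characteristic functions of clopen sets. This gives
\begin{equation*}
|F|_B(\Phi)|=\lim|F(\Phi\Psi)|\leq\|F\|\sup\nn{\Phi\Psi}_*\leq\|F\|,
\end{equation*}
where the last step uses Lemma~\ref{lemma3:dual-has-the-DPr}. Taking the supremum over $\Phi$ yields the claim.

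Applying this with $B=K\setminus\{p\}$ completes the argument. The only delicate points are the measure-theoretic approximation of $\chi_{K\setminus\{p\}}$ inside the integral against $\Phi\,dF$ and keeping track of the identification $L_\infty(\mu)\cong C(K)$; everything else is formal.
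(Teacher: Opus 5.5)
Your proposal is correct and follows essentially the paper's route. Both use the Gelfand representation $L_\infty(\mu)\cong C(K)$, identify $X^{**}$ with $(M(K),\nn{\cdot}_{**})$, and take the rank-one projection $\nu\mapsto\nu(\{t\})\delta_t$, whose complement is shown to be contractive because the bidual norm is a lattice norm. The only cosmetic difference is that you prove just the restriction inequality $\nn{F|_B}_{**}\le\nn{F}_{**}$, by approximating $\chi_B$ with continuous functions and invoking Lemma~\ref{lemma3:dual-has-the-DPr}; the paper instead proves the full monotonicity $|\lambda|\le|\nu|\Rightarrow\nn{\lambda}_{**}\le\nn{\nu}_{**}$ via the formula $\int_K|U|\,d|\nu|=\sup_{|V|\le|U|}|\int_K V\,d\nu|$.
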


\begin{proof}
Notice that if $\Phi, \Psi \in L_{\infty}(\mu)$ and $|\Phi| \leq |\Psi|$, then, with $\Phi/\Psi = 0$ on $\{\Psi = 0\}$,
\begin{equation*}
    \nn{\Phi}_{*} = \sup_{\nn{f} \leq 1} \left| \int_{\Omega} f \Phi d\mu \right| 
    = \sup_{\nn{f} \leq 1} \left| \int_{\Omega} \frac{\Phi}{\Psi} f \Psi d\mu \right| \leq \nn{\Psi}_{*},
\end{equation*}
because by monotonicity of $\nn{\cdot}$, we have that $\nn{\frac{\Phi}{\Psi} f}\leq\nn{f}$ for every $f\in L_1(\mu)$.

By the Gelfand representation theorem, there are a compact Hausdorff space $K$ and a surjective $*$-isomorphism
$\Gamma: L_{\infty}(\mu) \rightarrow C(K)$. For $U \in C(K)$, write $\nn{U}_{*} := \nn{\Gamma^{-1}(U)}_{*}$. By the above, $|U| \leq |V|$ implies $\nn{U}_{*} \leq \nn{V}_{*}$. For $\nu \in M(K)$, set
\begin{equation*}
    \nn{\nu}_{**}  := \sup_{\nn{U}_{*} \leq 1} \left| \int_K U \, d\nu \right|.
\end{equation*}
By the Riesz representation theorem, the mapping $\Gamma^*: (M(K), \nn{\cdot}_{**}) \rightarrow
    X^{**}$ defined by
\begin{equation*}
   (\Gamma^* \nu)(\Phi) = \int_K \Gamma(\Phi)  d\nu
   \end{equation*}
is a surjective linear isometry. Suppose that $\lambda, \nu \in M(K)$ and $|\lambda| \leq |\nu|$. If $\nn{U}_{*} \leq 1$, then
\begin{equation*}
    \left|   \int_K U d\lambda \right| \leq  \int_K |U| d|\nu| =  \sup_{\substack{V \in C(K)\\ |V| \leq |U|}} \left|  \int_K V d\nu \right| \leq \nn{\nu}_{**}.
\end{equation*}
Therefore, $|\lambda| \leq |\nu|$ implies $\nn{\lambda}_{**} \leq \nn{\nu}_{**}$. 

Finally, fix $t \in K$ and define $P: M(K) \rightarrow M(K)$ by $P\nu := \nu(\{t\}) \delta_t$ for every $\nu \in M(K)$. Then $P$ is a nonzero rank-one projection and the total variations satisfy $|P\nu| = |\nu|\!\upharpoonright_{\{t\}} \leq |\nu|$ and $|(I-P)\nu| = |\nu|\!\upharpoonright_{K\setminus\{t\}} \leq |\nu|$. It follows that $\nn{P} = 1$ and $\nn{I-P} \leq 1$, where $\nn{\cdot}$ is the operator norm with respect to $\nn{\cdot}_{**}$ . Therefore, $(M(K),\nn{\cdot}_{**})$, and hence $X^{**}$, fails the DPr.
\end{proof}

It follows from \cite[Proposition~2.5]{KMMP09} that the norm of a Banach space with the DPr cannot be Fr\'echet smooth, but we do not know if the norm $\nn{\cdot}$ satisfies any property of smoothness in between Gâteaux and Fr\'echet. Concerning rotundity properties, let us conclude the paper with the following remark. 

\begin{remark} It is easy to prove that the norm of a Banach space with the DPr cannot be locally uniformly rotund (LUR) and in fact cannot even be weakly locally uniformly rotund (wLUR). However, to the best of our knowledge, it is an open problem whether such a norm can be midpoint locally uniformly rotund. Recall that a Banach space $X$ is said to be midpoint locally uniformly rotund (MLUR, for short) if, whenever $x \in S_X$ and $(h_n) \subseteq X$ satisfy $\|x+h_n\| \longrightarrow 1$ and $\|x-h_n\| \longrightarrow 1$, then $\|h_n\| \longrightarrow 0$. We observe that, when $T$ is ergodic, the norm $\nn{\cdot}$ is not MLUR. Indeed, by Rokhlin's lemma (see for instance \cite[Theorem 2.3]{Wei22}), for every $n \in \N$, there exists $B_n \in \Sigma$ such that $B_n, TB_n, \ldots, T^{2n-1}B_n$ are pairwise disjoint and
\begin{equation*}
    \mu \left( \bigcup_{j=0}^{2n-1} T^j B_n \right) > 1-\frac{1}{n}.
\end{equation*}
Let
\begin{equation*}
    A_n := \bigcup_{j=0}^{n-1} T^j B_n.
\end{equation*}
Since the sets $B_n, TB_n, \ldots, T^{2n-1}B_n$ are pairwise disjoint, we have that 
\begin{equation*}
    \mu(A_n) = n\mu(B_n) = \frac{1}{2} \mu \left( \bigcup_{j=0}^{2n-1} T^j B_n \right) \longrightarrow \frac{1}{2}.
\end{equation*}
Moreover, $T^{-1}A_n \Delta A_n \subseteq T^{-1}B_n \cup T^{n-1}B_n$. Since $2n\mu(B_n)\leq 1$, we have
\begin{equation*}
    \mu(T^{-1}A_n \Delta A_n) \leq 2\mu(B_n) \leq \frac{1}{n}.
\end{equation*}
By iteration and measure preservation, we obtain
\begin{equation*}
    \mu(T^{-k}A_n \Delta A_n) \leq \frac{|k|}{n}
\end{equation*}
for every $k \in \Z$. Hence, for every fixed $k \in \Z$, $\mu(T^{-k}A_n \Delta A_n) \longrightarrow 0$. Since $\sum_{k \in \Z}\sqrt{a_k}<\infty$ and
$\mu(T^{-k}A_n \Delta A_n)\leq 1$, the dominated convergence theorem gives
\begin{equation*}
    \sum_{k \in \Z} \sqrt{a_k} \mu(T^{-k}A_n \Delta A_n) \longrightarrow 0.
\end{equation*}
Since $\sum_{k \in \Z}a_k=1$, for almost every $\omega \in \Omega$ we have
\begin{equation*}
    \left| \|J\chi_{A_n}(\omega)\|_2-\chi_{A_n}(\omega) \right| \leq \left(
        \sum_{k \in \Z}  a_k \left| \chi_{A_n}(T^k\omega)-\chi_{A_n}(\omega) \right|^2 \right)^{1/2} \leq \sum_{k \in \Z} \sqrt{a_k} \left| \chi_{A_n}(T^k\omega)-\chi_{A_n}(\omega) \right|.
\end{equation*}
Integrating, we obtain
\begin{equation*}
    \left| \nn{\chi_{A_n}}-\mu(A_n) \right| \leq \sum_{k \in \Z} \sqrt{a_k}   \mu(T^{-k}A_n \Delta A_n) \longrightarrow   0.
\end{equation*}
Therefore, $\nn{\chi_{A_n}} \longrightarrow \frac{1}{2}$. Since $T^{-k}(\Omega \setminus A_n) \Delta (\Omega \setminus A_n) = T^{-k}A_n \Delta A_n$, the same argument gives $\nn{\chi_{\Omega \setminus A_n}} \longrightarrow \frac{1}{2}$. Finally, let $h_n := \chi_{A_n}-\chi_{\Omega \setminus A_n}$. Since $|h_n|=\chi_\Omega$, we have $\nn{h_n}=1$ for every $n \in \N$. On the other hand, $\nn{\chi_\Omega+h_n} = 2\nn{\chi_{A_n}} \longrightarrow 1$ and  $\nn{\chi_\Omega-h_n} = 2\nn{\chi_{\Omega \setminus A_n}} \longrightarrow 1$. Since $\nn{\chi_\Omega}=1$ while $\nn{h_n}=1$ for every $n \in \N$, $\chi_\Omega$ is not an MLUR point.
\end{remark}

 \section*{Acknowledgments} The authors are deeply grateful to Petr Hájek for suggesting several ideas concerning possible constructions of smooth or strictly convex renormings with the Daugavet property. They are also thankful to Johann Langemets and Märt Põldvere for many helpful conversations on the topic of this paper. Part of this work was carried out during a research stay of Yoël Perreau in Prague, Czech Republic. He is grateful to the Department of Mathematics of the Faculty of Electrical Engineering at the Czech Technical University in Prague for its hospitality. He also thanks Michal Doucha for all his help during this period.

\section*{AI disclosure statement} The authors used OpenAI's ChatGPT Plus during the exploratory stage of this work. After several days of active interaction concerning the problem of constructing a strictly convex renorming of $L_1[0,1]$, inspired by the recent renorming construction of Cobollo and Hájek \cite{CH25}, ChatGPT suggested considering the norm introduced in this paper. The authors did not rely on the subsequent arguments or proofs produced by ChatGPT, which appeared to involve tools from dynamical systems, an area outside the authors' expertise. Instead, taking only the suggested norm as a starting point, the authors carried out the mathematical analysis independently and obtained stronger results than what it claimed, including that the dual has the DPr, that the space satisfies the ODPr, and that ergodicity of $T$, wMLUR, strict convexity and Gâteaux smoothness are all equivalent. Moreover, the direct proof for the Daugavet property in the space is also given by the authors.

\section*{Funding}
 
S. Dantas has been supported by the grants PID2021-122126NB-C31 and PID2021-122126NB-C33 funded by MICIU/AEI/ 10.13039/ 501100011033 and by ERDF/EU. J.K. Kaasik was supported by the Estonian Research Council grant PRG1901. Y. Perreau has been supported by the Estonian Research Council grant PRG2545.


\begin{thebibliography}{FaHaMo}

\bibitem[AALMPPV24]{AALMPPV24}
\textsc{T.~A.~Abrahamsen}, \textsc{R.~J.~Aliaga}, \textsc{V.~Lima}, \textsc{A.~Martiny}, \textsc{Y.~Perreau},
\textsc{A.~Proch\'azka}, and \textsc{T.~Veeorg},
\emph{Delta-points and their implications for the geometry of Banach spaces},
J. Lond. Math. Soc. (2) \textbf{109} (2024), no.~5, Paper No.~e12913, 38 pp.

\bibitem[AHLT26]{AHLT26}
\textsc{T.~A.~Abrahamsen}, \textsc{P.~H\'ajek}, \textsc{V.~Lima}, and \textsc{S.~Troyanski},
\emph{Strictly convex norms and the local diameter two property},
Rev. Mat. Complut. \textbf{39} (2026), 857--877.

\bibitem[AHNTT16]{AHNTT16}
\textsc{T.~A.~Abrahamsen}, \textsc{P.~H\'ajek}, \textsc{O.~Nygaard}, \textsc{J.~Talponen}, and \textsc{S.~Troyanski},
\emph{Diameter 2 properties and convexity},
Studia Math. \textbf{232} (2016), no.~3, 227--242.

\bibitem[AHLP20]{AHLP20}
\textsc{T.~A.~Abrahamsen}, \textsc{R.~Haller}, \textsc{V.~Lima}, and \textsc{K.~Pirk},
\emph{Delta- and Daugavet-points in Banach spaces},
Proc. Edinb. Math. Soc. (2) \textbf{63} (2020), no.~2, 475--496.

\bibitem[ALMP22]{ALMP22}
\textsc{T.~A.~Abrahamsen}, \textsc{V.~Lima}, \textsc{A.~Martiny}, and \textsc{Y.~Perreau},
\emph{Asymptotic geometry and Delta-points},
Banach J. Math. Anal. \textbf{16} (2022), no.~4, Paper No.~57.

\bibitem[BGM05]{BGM05}
\textsc{J.~Becerra Guerrero} and \textsc{M.~Mart\'in},
\emph{The Daugavet property of $C^*$-algebras, $JB^*$-triples, and of their isometric preduals},
J. Funct. Anal. \textbf{224} (2005), no.~2, 316--337.

\bibitem[Bog07]{Bog07}
\textsc{V.~I.~Bogachev},
\emph{Measure Theory, Vol.~I},
Springer, Berlin, 2007.

\bibitem[CH25]{CH25}
\textsc{Ch.~Cobollo} and \textsc{P.~H\'ajek},
\emph{Octahedrality and G\^ateaux smoothness},
J. Math. Anal. Appl. \textbf{543} (2025), Art.~128968.

\bibitem[Dau63]{Daugavet63}
\textsc{I.~K.~Daugavet},
\emph{A property of complete continuous operators in the space {{\(C\)}}},
Usp. Mat. Nauk \textbf{18} (1963), no.~5(113), 157--158 (Russian).

\bibitem[DBPS25]{DBPS25}
\textsc{C.~A.~De~Bernardi}, \textsc{A.~Preti}, and \textsc{J.~Somaglia},
\emph{A note on smooth rotund norms which are not midpoint locally uniformly rotund},
J. Math. Anal. Appl. \textbf{550} (2025), no.~2, Art.~129544.

\bibitem[DGZ93]{DGZ93}
\textsc{R. Deville}, \textsc{G. Godefroy} and \textsc{V. Zizler}, \emph{Smoothness and renormings in Banach spaces}, Harlow: Longman Scientific \& Technical; New York: John Wiley \& Sons, Inc. (1993)

\bibitem[DS58]{DunfordSchwartz}
\textsc{N.~Dunford} and \textsc{J.~T.~Schwartz},
\emph{Linear Operators. Part I: General Theory},
Pure and Applied Mathematics, Vol.~7,
Interscience Publishers, New York, 1958.
 
\bibitem[GPRZ18]{GPRZ18}
\textsc{L.~Garc\'ia-Lirola}, \textsc{A.~Proch\'azka}, and \textsc{A.~Rueda Zoca},
\emph{A characterisation of the Daugavet property in spaces of Lipschitz functions},
J. Math. Anal. Appl. \textbf{464} (2018), no.~1, 473--492.

\bibitem[HLPV23]{HLPV23}
  R.~Haller, J.~Langemets, Y.~Perreau, and T.~Veeorg,
  \emph{Unconditional bases and {D}augavet renormings},
  J. Funct. Anal. 286 (2024), no.~12, Paper No. 110421,
  31. 
  
\bibitem[Ham26-1]{Ham26-1}
\textsc{S.~Hamad},
\emph{About smooth and non-poor subspaces of Daugavet spaces},
preprint (2026), arXiv:2604.24529.

\bibitem[Ham26-2]{Ham26-2}
\textsc{S.~Hamad},
\emph{On the weak operator Daugavet property and poor subspaces},
preprint (2026), arXiv:2608.11098.

\bibitem[Kad96]{Kad96}
\textsc{V.~M.~Kadets},
\emph{Some remarks concerning the Daugavet equation},
Quaest. Math. \textbf{19} (1996), no.~1--2, 225--235.

\bibitem[KMMP09]{KMMP09}
\textsc{V.~Kadets}, \textsc{M.~Mart\'in}, \textsc{J.~Mer\'i}, and \textsc{R.~Pay\'a},
\emph{Convexity and smoothness of Banach spaces with numerical index one},
Illinois J. Math. \textbf{53} (2009), no.~1, 163--182.

\bibitem[KMRZW25]{KMRZW25}
\textsc{V.~Kadets}, \textsc{M.~Mart\'{\i}n}, \textsc{A.~Rueda~Zoca}, and \textsc{D.~Werner},
\emph{Banach spaces with the {Daugavet} property},
preliminary version (2025). Available at DIGIBUG
\url{https://hdl.handle.net/10481/104200}.

\bibitem[KSSW00]{KSSW00}
\textsc{V.~M.~Kadets}, \textsc{R.~V.~Shvidkoy}, \textsc{G.~G.~Sirotkin}, and \textsc{D.~Werner},
\emph{Banach spaces with the {Daugavet} property},
Trans. Amer. Math. Soc. \textbf{352} (2000), no.~2, 855--873.

\bibitem[KW04]{KW04}
\textsc{V.~M.~Kadets} and \textsc{D.~Werner},
\emph{A Banach space with the Schur and the Daugavet property},
Proc. Amer. Math. Soc. \textbf{132} (2004), no.~6, 1765--1773.

\bibitem[LRZ21]{LRZ2021}
\textsc{G.~L\'opez-P\'erez} and \textsc{A.~Rueda Zoca},
\emph{$L$-orthogonality, octahedrality and Daugavet property in Banach spaces},
Adv. Math. \textbf{383} (2021), Paper No.~107719.

\bibitem[MPRZ24]{MPRZ24}
M.~Mart\'in, Y.~Perreau, and A.~Rueda~Zoca, \emph{Diametral notions for elements of the unit ball of a {B}anach space}, Dissertationes Math.
\textbf{594} (2024), 61pp.

\bibitem[MRZ22]{MRZ22}
\textsc{M.~Mart\'in} and \textsc{A.~Rueda Zoca},
\emph{Daugavet property in projective symmetric tensor products of Banach spaces},
Banach J. Math. Anal. \textbf{16} (2022), Art.~35.

\bibitem[MU14]{MU14}
\textsc{M.~Mart\'in} and \textsc{Y.~Ueda},
\emph{On the geometry of von Neumann algebra preduals},
Positivity \textbf{18} (2014), no.~3, 519--530.

\bibitem[MRZ25]{MRZ25}
\textsc{R.~Medina} and \textsc{A.~Rueda Zoca},
\emph{A characterisation of the Daugavet property in spaces of vector-valued Lipschitz functions},
J. Funct. Anal. \textbf{289} (2025), no.~1, Art.~110896.

\bibitem[NPTV24]{NPTV24}
\textsc{O.~Nygaard}, \textsc{M.~P\~oldvere}, \textsc{S.~Troyanski}, and \textsc{T.~Viil},
\emph{Strictly convex renormings and the diameter 2 property},
Studia Math. \textbf{274} (2024), no.~1, 37--49.

\bibitem[Oik02]{Oik02}
\textsc{T.~Oikhberg},
\emph{The Daugavet property of $C^*$-algebras and non-commutative $L_p$-spaces},
Positivity \textbf{6} (2002), no.~1, 59--73.
 

\bibitem[RTV21]{RTV21}
\textsc{A.~Rueda Zoca}, \textsc{P.~Tradacete}, and \textsc{I.~Villanueva},
\emph{Daugavet property in tensor product spaces},
J. Inst. Math. Jussieu \textbf{20} (2021), no.~4, 1409--1428.

\bibitem[Shv00]{Shv00}
\textsc{R.~V.~Shvidkoy},
\emph{Geometric aspects of the Daugavet property},
J. Funct. Anal. \textbf{176} (2000), no.~2, 198--212.

\bibitem[Tal90]{Tal90}
\textsc{M.~Talagrand},
\emph{The three-space problem for $L_1$},
J. Amer. Math. Soc. \textbf{3} (1990), no.~1, 9--29.

\bibitem[Wei22]{Wei22}
\textsc{C.~Wei\ss},
\emph{Systems of rank one, explicit Rokhlin towers, and covering numbers},
Arch. Math. \textbf{118} (2022), 181--193.

\end{thebibliography}
\end{document}